\newcommand\bigzerol{\smash{\hbox{\large 0}}}
\newcommand\bigzerou{\smash{\lower.3ex\hbox{\large 0}}}
\newtheorem{thm}[subsection]{Theorem}
\newtheorem{lem}[subsection]{Lemma}
\theoremstyle{definition}
\newtheorem{rem}[subsection]{Remark}
\author{
Keisuke YANO and Fumiyasu KOMAKI\\
\small Department of Mathematical Informatics,\\
\small Graduate School of Information Science and Technology,\\
\small The University of Tokyo\\
\small 7-3-1 Hongo, Bunkyo-ku, Tokyo 113-8656, JAPAN\\
\small \texttt{\{keisuke\_yano,komaki\}@mist.i.u-tokyo.ac.jp}
}
\date{}
\title{Information criteria for multistep ahead predictions}
\begin{document}
\maketitle
\begin{abstract}
We propose an information criterion for multistep ahead predictions.
It is also used for extrapolations.
For the derivation,
we consider multistep ahead predictions under local misspecification.
In the prediction,
we show that
Bayesian predictive distributions
asymptotically have smaller Kullback--Leibler risks
than
plug-in predictive distributions.
From the results,
we construct an information criterion for multistep ahead predictions
by using an asymptotically unbiased estimator 
of the Kullback--Leibler risk of Bayesian predictive distributions.
We show the effectiveness of the proposed information criterion
throughout the numerical experiments.
\end{abstract}

\section{Introduction}

Consider multistep ahead predictions as follows:
let $x^{(N)}=(x_{1},\ldots,x_{N})$ be data 
from distribution $p(x^{(N)})$
and 
let $y^{(M)}=(y_{1},\ldots,y_{M})$ be target variables
from distribution $q(y^{(M)})$.
We assume that sample size $M$ is given as 
the constant multiplication of sample size $N$,
i.e., we assume that $M=cN$.
We predict the distribution of the target variables on the basis of the data.
Here, distributions $p(x^{(N)})$ and $q(y^{(M)})$ may be different but
we assume that $x_{1},\ldots,x_{N},y_{1},\ldots,y_{M}$ are independent.

For the prediction,
we consider $m_{\mathrm{full}}$ parametric models of the distributions of the data and the target variables
as follows:
for $m\in\{1,\ldots,m_{\mathrm{full}}\}$,
the $m$-th model $\mathcal{M}_{m}$ 
is given as $\{p_{m}(x^{(N)}|\theta_{m})q_{m}(y^{(M)}|\theta_{m}):\theta_{m}\in\Theta_{m}\}$.
Here,
$\Theta_{m}$ is a $d_{m}$-dimensional parametric space.
For simplicity,
we denote parameter $\theta_{m_{\mathrm{full}}}$ by $\omega$,
distribution $p_{m_{\mathrm{full}}}(x^{(N)}|\omega)$ by $p(x^{(N)}|\omega)$,
and
distribution $q_{m_{\mathrm{full}}}(y^{(M)}|\omega)$ by $q(y^{(M)}|\omega)$.
We denote parameter space $\Theta_{m_{\mathrm{full}}}$ by $\Theta$
and dimension $d_{m_{\mathrm{full}}}$ by $d_{\mathrm{full}}$.
After the model selection,
we construct the predictive distribution in the selected model.

As an example, consider the curve fitting.
We obtain the values of the unknown curve at points $(z_{1},\ldots,z_{i},\ldots,z_{N})$
and predict the distribution of the values at points $(z_{N+1},\ldots,z_{N+j},\ldots,z_{N+M})$.
We use regression models with the basis set
$\{\phi_{a}\}_{a=1}^{d_{\mathrm{full}}}$:
for $m\in\{1,\ldots,d_{\mathrm{full}}\}$,
for $i\in\{1,\ldots,N\}$,
and
for $j\in\{1,\ldots,M\}$,
the  $i$-th data and the $j$-th target variable in the $m$-th model are
given by
\begin{eqnarray*}
x_{i}=\mathop{\Sigma}_{a=1}^{m}\phi_{a}(z_{i})\theta_{m}^{a}+\epsilon_{i}
&\mathrm{and}&
y_{j}=\mathop{\Sigma}_{a=1}^{m}\phi_{a}(z_{N+j})\theta_{m}^{a}+\epsilon_{N+j},
\end{eqnarray*}
respectively.
Here, $\theta_{m}=(\theta_{m}^{1},\ldots,\theta_{m}^{m})$ represents an unknown vector.
Two random vectors
$\epsilon=(\epsilon_{1},\ldots,\epsilon_{N})^{\top}$
and 
$\tilde{\epsilon}=(\epsilon_{N+1},\ldots,\epsilon_{N+M})^{\top}$ are 
independent and distributed according to Gaussian distributions 
with mean zero and diagonal covariance matrices 
$\sigma^{2}I_{N\times N}$ and $\sigma^{2}I_{M\times M}$,
respectively.

We measure the performance of the predictive distribution $\hat{q}$ by the Kullback--Leibler risk:
\begin{eqnarray*}
R(p(\cdot)q(\cdot),\hat{q})&=&\int p(x^{(N)})\int q(y^{(M)})\log\frac{q(y^{(M)})}{\hat{q}(y^{(M)};x^{(N)})}
\mathrm{d}y^{(M)}\mathrm{d}x^{(N)}.
\end{eqnarray*}

In this paper,
we consider the asymptotics as the sample sizes $N$ and $M$ simultaneously go to infinity.
Note that since $M=cN$ we consider that $N$ goes to infinity.
We show that
for any smooth prior $\pi$,
the Bayesian predictive distribution $q_{m,\pi}(y^{(M)}|x^{(N)})$
in submodel $\mathcal{M}_{m}$
\begin{eqnarray}
q_{m,\pi}(y^{(M)}|x^{(N)})&=&
\frac{\int q_{m}(y^{(M)}|\theta_{m})p_{m}(x^{(N)}|\theta_{m})\pi(\theta_{m})
\mathrm{d}\theta_{m}}{\int p_{m}(x^{(N)}|\theta_{m})\pi(\theta_{m})\mathrm{d}\theta_{m}}
\end{eqnarray}
asymptotically has smaller Kullback--Leibler risk
than the plug-in predictive distribution
$q_{m}(y^{(M)}|\hat{\theta}_{m}(x^{(N)}))$
with the maximum likelihood estimator
in submodel $\mathcal{M}_{m}$.
Further,
the Kullback--Leibler risk of the Bayesian predictive distribution
varies according to the Fisher information matrices of the data and the target variables;
in the $i.i.d.$ settings,
the risk varies according to the multiplicative constant $c$.

From the results,
we construct an information criterion for the multistep ahead prediction
by using an asymptotically unbiased estimator 
of the Kullback--Leibler risk of the Bayesian predictive distribution.
Several numerical experiments show the performance of the proposed information criterion.

This paper is organized as follows:
in Section 2,
we prepare the notations and
state the assumptions to be used.
In Section 3,
we show that Bayesian predictive distributions
have smaller Kullback--Leibler risks than 
plug-in predictive distributions
in multistep ahead predictions.
In Section 4,
we propose information criteria for multistep ahead predictions.
By considering the variance of proposed information criteria,
we propose their bootstrap adjustments.
In Section 5,
we show two numerical experiments:
the curve fitting
and the normal regression model with an unknown variance.
In Section 6,
we present our conclusions.

\section{Notations and Assumptions}

We consider 
that the true distributions $p(x^{(N)})$ and $q(y^{(M)})$ 
belong to the full model $\mathcal{M}_{m_{\mathrm{full}}}$:
\begin{eqnarray*}
p(x^{(N)})=p(x^{(N)}|\omega^{*})
\,&\mathrm{and}&\,
q(y^{(M)})=q(y^{(M)}|\omega^{*}),
\end{eqnarray*}
where $\omega^{*}$ is a certain point in $\Theta$.
We refer to this parameter point $\omega^{*}$ as the true parameter point.

We consider
that the full model $\mathcal{M}_{m_{\mathrm{full}}}$
contains submodel $\mathcal{M}_{m}$.
Then,
we decompose the parameter $\omega$ in the full model $\mathcal{M}_{m_{\mathrm{full}}}$
into $\omega(\theta_{m},\gamma_{m})$.
We denote the parameterization $(\theta_{m},\gamma_{m})$ by $\xi$.
Under parameterization $\xi$, we denote the true parameter point by $\xi^{*}$.

To avoid the collision of indices,
we use index $i,j,k$ for observation $x_{i}$,
index $s,t,u$ for parameter $\omega^{s}$,
and
index $a,b,c$ for parameter $\theta_{m}^{a}$.
We use index $\kappa,\lambda,\mu$ for parameter $\gamma_{m}^{\kappa}$,
index $\alpha,\beta,\gamma$ for parameter $\xi^{\alpha}$,
and
index $m,n,l$ for submodel $\mathcal{M}_{m}$.

For simplicity,
we denote the Kullback--Leibler risk 
by $R(\omega^{*},\hat{q})$,
i.e.,
the function of the true parameter point $\omega^{*}$ and predictive distribution $\hat{q}$.
We denote the expectation with respect to the distribution with the parameter point $\omega$ 
by $\mathrm{E}_{\omega}$.

We consider two maximum likelihood estimators.
We denote
the maximum likelihood estimator of $p(x^{(N)}|\omega)$ by $\hat{\omega}(x^{(N)})$
and
the restricted maximum likelihood estimator
of $p(x^{(N)}|\omega(\theta_{m},0))$
by $\hat{\theta}_{m}(x^{(N)})$.

We consider the projection of the true parameter point into $\Theta_{m}$.
We denote
the best approximating point of $\omega^{*}$ with respect to $p_{m}(x^{(N)}|\theta_{m})$
by $\theta^{(p)}_{m}$.
In other words,
$\theta^{(p)}_{m}$ is defined by
\begin{eqnarray*}
	\theta^{(p)}_{m}=\mathop{\mathrm{argmax}}_{\theta_{m}\in\Theta_{m}}
\mathrm{E}_{\omega^{*}}[\log p(x^{(N)}|\omega(\theta_{m},0))].
\end{eqnarray*}

We denote the $(i,j)$-component of the Fisher information matrix of $p(x^{(N)}|\omega)$
by $g^{(p)}_{ij}(\omega)$
and that of $q(y^{(M)}|\omega)$
by $g^{(q)}_{ij}(\omega)$,
and we denote the $(\alpha,\beta)$-components of those
with respect to parameter $\xi$
by $g^{(p)}_{\alpha\beta}(\xi)$ and $g^{(q)}_{\alpha\beta}(\xi)$,
respectively.
We denote the $(a,b)$-component 
of the sub-matrix with respect to $\theta_{m}$
of Fisher information matrix $g^{(p)}_{\alpha\beta}(\xi)$
by $g^{(p)}_{ab}(\theta_{m})$
and that of $g^{(q)}_{\alpha\beta}(\xi)$
by $g^{(q)}_{ab}(\theta_{m})$.
We denote the sub-matrices with (a,b)-components as $g^{(p)}_{ab}(\theta_{m})$ and $g^{(q)}_{ab}(\theta_{m})$
by $g^{(p)}(\theta_{m})$ and $g^{(q)}(\theta_{m})$, respectively.

We write the upper index $-1$ to denote the inverse of the matrix;
we denote the inverses of Fisher information matrices 
$g^{(p)}(\omega)$, $g^{(q)}(\omega)$, $g^{(p)}(\xi)$, and $g^{(q)}(\xi)$
by
$g^{(p)-1}(\omega)$, 
$g^{(q)-1}(\omega)$, $g^{(p)-1}(\xi)$,
and
$g^{(q)-1}(\xi)$,
respectively.
We use the upper index for the components of the inverse of the Fisher information matrix;
we denote the $(i,j)$-components of the inverse Fisher information matrices $g^{(p)-1}$ and $g^{(q)-1}$
by $g^{(p) ij}(\omega)$ and $g^{(q)ij}(\omega)$,
respectively.
We denote the $(a,b)$-components of the inverse Fisher information matrices $g^{(p)-1}(\theta_{m})$ and $g^{(q)-1}(\theta_{m})$
by $g^{(p)ab}_{m}(\theta_{m})$ and $g^{(q)ab}_{m}(\theta_{m})$,
respectively.
Note that
the $(a,b)$-component of the inverse Fisher information matrix 
with $(\alpha,\beta)$-component as $g^{(p)\alpha\beta}(\xi(\theta_{m},0))$
is not generally identical to $g^{(p)ab}_{m}(\theta_{m})$.
We adopt Einstein summation convention:
if the same indices appear in any one term,
it implies summation over that index.

For the model selection,
we consider local misspecification.
The local misspecification is that
the true parameter point $\xi^{*}$
and
submodel $\mathcal{M}_{m}$
satisfy the following equation:
\begin{eqnarray}
\sqrt{N}\{\xi^{*\alpha}-\xi^{\alpha}(\theta^{(p)}_{m},0)\}
=h^{\alpha}
\quad\mathrm{for}\quad \alpha=1,\ldots,d_{\mathrm{full}}.
\label{Localmisspec}
\end{eqnarray}
If $h$ vanishes,
the assumption means
that the true distribution is included in submodel $\mathcal{M}_{m}$.
Thus,
the assumption is an extension of the assumption 
that the true distribution is included in submodel $\mathcal{M}_{m}$.
The assumption is known as local alternatives in statistical test theory.
See \citet{vanderVaart}.
The local misspecification in the model selection context is argued, for example,
in
\citet{Shimodaira(1997)},
\citet{HjortandClaeskens(2003)},
and \citet{ClaeskensandHjort(2003)}.
See also \citet{LeebandPotscher(2005)}.
Note that the assumption does not depend on parameterizations:
if we adopt parameterization $\omega$,
the assumption (\ref{Localmisspec}) is denoted by
\begin{eqnarray}
\sqrt{N}\{\omega^{*s}-\omega^{s}(\theta^{(p)}_{m},0)\}=\frac{\partial \omega^{s}}{\partial \xi^{\alpha}}(\xi^{*})h^{\alpha}+\mathrm{o}(1)
\quad\mathrm{for}\quad s=1,\ldots,d_{\mathrm{full}}.
\label{Localmisspec_omega}
\end{eqnarray}
In this parameterization,
we denote $\frac{\partial\omega^{s}}{\partial\xi^{\alpha}}(\xi^{*})h^{\alpha}$ in (\ref{Localmisspec_omega}) by $h^{s}$.

\section{Multi-step ahead predictions under local misspecification}

First,
we expand the Kullback--Leibler risk of the Bayesian predictive distribution
in multistep ahead predictions under local misspecification.
Next,
we show that the Kullback--Leibler risk of the Bayesian predictive distribution 
is asymptotically smaller 
than that of the plug-in predictive distribution.

\begin{thm}
\label{Risk_decomp}
Assume that the true parameter point $\xi^{*}$ and submodel $\mathcal{M}_{m}$ satisfy (\ref{Localmisspec}).
Then,
for any smooth prior $\pi$,
the Kullback--Leibler risk
of the Bayesian predictive distribution $q_{m,\pi}$ in submodel $\mathcal{M}_{m}$
is asymptotically expanded as
\begin{eqnarray}
R(\omega^{*},q_{m,\pi})&=&
\frac{1}{2N}S_{\alpha\beta}(\xi^{*})h^{\alpha}h^{\beta}
+\frac{1}{2}\log\frac{|g^{(p)}(\theta^{(p)}_{m})+g^{(q)}(\theta^{(p)}_{m})|}{|g^{(p)}(\theta^{(p)}_{m})|}+\mathrm{o}(1),
\label{KL_NND_Bayes}
\end{eqnarray}
where $|\cdot|$ is a determinant and $S_{\alpha\beta}(\xi^{*})$ is the $(\alpha,\beta)$-component of the matrix given by
\begin{eqnarray*}
	S(\xi^{*})&=&
\left(g^{(q)-1}(\xi^{*})
+\begin{pmatrix}g_{m}^{(p)-1}(\theta^{(p)}_{m}) & \bigzerou^{\top}_{(d_{\mathrm{full}}-d_{m})\times d_{m}} \\ \bigzerol_{(d_{\mathrm{full}}-d_{m})\times d_{m}} & \bigzerol_{(d_{\mathrm{full}}-d_{m})\times (d_{\mathrm{full}}-d_{m})} \end{pmatrix}\right)^{-1}.
\end{eqnarray*}
Here, $\bigzerol_{(d_{\mathrm{full}}-d_{m})\times d_{m}}$ is the $(d_{\mathrm{full}}-d_{m})\times d_{m}$-dimensional zero matrix
and
$\bigzerol_{(d_{\mathrm{full}}-d_{m})\times (d_{\mathrm{full}}-d_{m})}$ is the $(d_{\mathrm{full}}-d_{m})\times (d_{\mathrm{full}}-d_{m})$-dimensional zero matrix.
\end{thm}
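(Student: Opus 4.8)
The plan is to obtain (\ref{KL_NND_Bayes}) by combining a Laplace-type expansion of the Bayesian predictive density with the joint asymptotics, under (\ref{Localmisspec}), of the restricted maximum likelihood estimator $\hat{\theta}_{m}$ and of the score of the target likelihood. Write
$$R(\omega^{*},q_{m,\pi})=\mathrm{E}_{\omega^{*}}\!\left[\int q(y^{(M)}|\omega^{*})\,\log\frac{q(y^{(M)}|\omega^{*})}{q_{m,\pi}(y^{(M)}|x^{(N)})}\,\mathrm{d}y^{(M)}\right],$$
with the outer expectation over $x^{(N)}\sim p(\cdot|\omega^{*})$, and insert $q_{m}(y^{(M)}|\theta^{(p)}_{m})$ as an intermediate point, so that the integrand splits as $\log\{q(y^{(M)}|\omega^{*})/q_{m}(y^{(M)}|\theta^{(p)}_{m})\}+\log\{q_{m}(y^{(M)}|\theta^{(p)}_{m})/q_{m,\pi}(y^{(M)}|x^{(N)})\}$. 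Integrated against $q(\cdot|\omega^{*})$, the first summand is a Kullback--Leibler divergence between the true target distribution and a fixed submodel point; since by (\ref{Localmisspec_omega}) the true point sits $\mathrm{O}(N^{-1/2})$ off the submodel and $\theta^{(p)}_{m}$ differs from the target-side projection by $\mathrm{O}(N^{-1/2})$, local quadraticity of the divergence reduces it to $\tfrac1{2N}$ times a quadratic form in $h$ involving the target information $g^{(q)}(\xi^{*})$, up to $\mathrm{o}(1)$. The second summand is where the prior, the posterior averaging, and the randomness of $\hat{\theta}_{m}$ enter.

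For the second summand I would apply Laplace's method separately to the numerator $\int q_{m}(y^{(M)}|\theta_{m})p_{m}(x^{(N)}|\theta_{m})\pi(\theta_{m})\,\mathrm{d}\theta_{m}$ and to the denominator $\int p_{m}(x^{(N)}|\theta_{m})\pi(\theta_{m})\,\mathrm{d}\theta_{m}$ of $q_{m,\pi}$. The denominator concentrates at $\hat{\theta}_{m}$ with Hessian $-\nabla^{2}\log p_{m}(x^{(N)}|\hat{\theta}_{m})=g^{(p)}(\theta^{(p)}_{m})+\mathrm{o}(N)$; the numerator concentrates at the point $\tilde{\theta}_{m}$ maximizing $\log q_{m}(y^{(M)}|\theta_{m})+\log p_{m}(x^{(N)}|\theta_{m})$, with Hessian $g^{(p)}(\theta^{(p)}_{m})+g^{(q)}(\theta^{(p)}_{m})+\mathrm{o}(N)$, both being the $d_{m}\times d_{m}$ submodel Fisher matrices of Section 2. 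Taking the ratio, the Gaussian normalizers produce precisely $\tfrac12\log\{|g^{(p)}(\theta^{(p)}_{m})+g^{(q)}(\theta^{(p)}_{m})|/|g^{(p)}(\theta^{(p)}_{m})|\}$; the smoothness of $\pi$ pushes the factor $\log\{\pi(\tilde{\theta}_{m})/\pi(\hat{\theta}_{m})\}$ into the $\mathrm{o}(1)$ remainder; and the exponents leave $\log q_{m}(y^{(M)}|\tilde{\theta}_{m})-\log q_{m}(y^{(M)}|\theta^{(p)}_{m})+\{\log p_{m}(x^{(N)}|\tilde{\theta}_{m})-\log p_{m}(x^{(N)}|\hat{\theta}_{m})\}$. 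Using $\tilde{\theta}_{m}-\hat{\theta}_{m}=(g^{(p)}(\theta^{(p)}_{m})+g^{(q)}(\theta^{(p)}_{m}))^{-1}\nabla\log q_{m}(y^{(M)}|\hat{\theta}_{m})+\mathrm{o}(N^{-1/2})$, second-order Taylor expansions of both log-likelihoods about $\hat{\theta}_{m}$, and $\nabla\log p_{m}(x^{(N)}|\hat{\theta}_{m})=0$, this collapses to a single quadratic form in the target score $\nabla\log q_{m}(y^{(M)}|\hat{\theta}_{m})$.

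It then remains to average over $x^{(N)}$ and $y^{(M)}$. I would expand $\hat{\theta}_{m}$ about $\theta^{(p)}_{m}$, using that its asymptotic distribution is centred Gaussian with covariance $g^{(p)-1}_{m}(\theta^{(p)}_{m})$ (the first-order condition defining $\theta^{(p)}_{m}$ removes the would-be mean shift from the local misspecification), and expand the target score about $\theta^{(p)}_{m}$: by (\ref{Localmisspec}) the mean of $\nabla\log q_{m}(y^{(M)}|\theta^{(p)}_{m})$ is linear in $h$ with a coefficient built from the mixed $(\theta_{m},\xi)$-block of $g^{(q)}(\xi^{*})$, which is how $h$ propagates into the second summand. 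Since $x^{(N)}$ and $y^{(M)}$ are independent, cross terms between the two sources of randomness vanish in expectation, and the surviving squared terms, combined with the quadratic form from the first summand, reorganize --- via a block-matrix (Schur complement) identity --- into $\tfrac1{2N}S_{\alpha\beta}(\xi^{*})h^{\alpha}h^{\beta}$, with $g^{(q)-1}(\xi^{*})$ and the embedded estimation variance $g^{(p)-1}_{m}(\theta^{(p)}_{m})$ of $\hat{\theta}_{m}$ adding exactly as in the stated form of $S(\xi^{*})$. The step I expect to be the main obstacle is this last reorganization: keeping straight the Section 2 distinction between $g^{(p)ab}_{m}(\theta_{m})$ and the $(a,b)$-block of $g^{(p)\alpha\beta}(\xi)$, checking that the estimation variance enters only through the $\theta_{m}$-block (the $\gamma_{m}$ coordinates being frozen at $0$ in the submodel), and verifying that all remainders from the Laplace step and the Taylor expansions are genuinely $\mathrm{o}(1)$ after the $\mathrm{E}_{\omega^{*}}$ averaging.
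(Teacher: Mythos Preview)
Your plan is correct and in substance the same as the paper's: both arguments Laplace-expand the numerator and denominator of $q_{m,\pi}$ about the joint and $x$-only restricted MLEs (your $\tilde{\theta}_{m}$ is the paper's $\hat{\theta}(x^{(N)},y^{(M)})$), extract the log-determinant ratio from the Gaussian normalizers, push the prior ratio to $\mathrm{o}(1)$, and then reduce the remaining exponent by second-order Taylor expansions and moment computations. The one organizational difference is the pivot: you insert $q_{m}(\cdot\mid\theta^{(p)}_{m})$ and split off the pure bias $\tfrac{1}{2N}g^{(q)}_{\alpha\beta}h^{\alpha}h^{\beta}$ up front, whereas the paper writes $R=\mathrm{E}_{\omega^{*}}[\log\{r(\cdot|\omega^{*})/r_{m,\pi}\}]-\mathrm{E}_{\omega^{*}}[\log\{p(\cdot|\omega^{*})/p_{m,\pi}\}]$ with $r=pq$, expands each piece about $\omega^{*}$, and lands first on an invariant quadratic form $P_{\alpha\beta}h^{\alpha}h^{\beta}/N$ with $P_{\alpha\beta}=g^{(q)}_{\alpha\beta}-g^{(r)ab}_{m}g^{(r)}_{a\alpha}g^{(r)}_{b\beta}$. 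It then proves $P_{\alpha\beta}h^{\alpha}h^{\beta}=S_{\alpha\beta}h^{\alpha}h^{\beta}$ via Sherman--Morrison--Woodbury together with the orthogonality relation $g^{(p)}_{a\alpha}(\xi^{(p)})h^{\alpha}=\mathrm{O}(1/\sqrt{N})$, which is exactly the first-order condition for $\theta^{(p)}_{m}$ that you invoke when you say $\hat{\theta}_{m}$ has no mean shift. Your route is slightly more direct; the paper's buys a parameterization-invariant intermediate expression. The final block-matrix reorganization you flag as the main obstacle is indeed where the work lies, and the paper devotes roughly a page of index calculus to it.
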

The proof is given in the appendix.
The expansion is invariant up to constant order under the reparameterization $\omega$ in the full model.
See (\ref{L_exp_final}) in the appendix.

\begin{rem}
Note that the asymptotic Kullback--Leibler risk of the Bayesian predictive distribution 
does not depend on priors up to constant order.
This corresponds to the fact that
the asymptotic Kullback--Leibler risk of the Bayesian predictive distribution in one-step ahead predictions
does not depend on priors up to the $N^{-1}$ order.
If $h$ vanishes and if the data and the target variables are identically and identically distributed,
then,
$R(\omega^{*},q_{m,\pi})$ is given by $d\log\{(N+M)/N\}/2$ up to constant order.
In one-step ahead predictions,
it is known that the asymptotic Kullback--Leibler risk of the Bayesian predictive distributions is given as $d/(2N)$ up to the $N^{-1}$ order.
The Bayesian predictive distribution $q_{m,\pi}$ is decomposed as
\begin{eqnarray*}
q_{m,\pi}(y^{(M)}|x^{(N)})&=&q_{m,\pi}(y_{M}|x^{(N)},y^{(M-1)})q_{m,\pi}(y_{M-1}|x^{(N)},y^{(M-2)})\ldots q_{m,\pi}(y_{1}|x^{(N)}).
\end{eqnarray*}
Since the Kullback--Leibler risk of the Bayesian predictive distribution is decomposed according to the above decomposition,
$R(\omega^{*},q_{m,\pi})$ is also calculated as $\lim_{N\rightarrow\infty}\Sigma_{j=1}^{M}d/(2N+2j)$.
This is equal to $d\log\{(N+M)/N\}/2$.
\end{rem}

By using the above theorem,
we show that the Bayesian predictive distribution has smaller Kullback--Leibler risk
than the plug-in predictive distribution in the multistep ahead prediction.

\begin{thm}
Assume that the true parameter point $\xi^{*}$ and submodel $\mathcal{M}_{m}$ satisfy (\ref{Localmisspec}).
Then,
for any smooth prior $\pi$,
the Kullback--Leibler risk $R(\omega^{*},q_{m,\pi})$ 
of the Bayesian predictive distribution 
in submodel $\mathcal{M}_{m}$
is smaller in constant order
than the Kullback--Lebler risk $R(\omega^{*},q_{m}(\cdot|\hat{\theta}_{m}))$ 
of the plug-in predictive distribution 
with the maximum likelihood estimator in submodel $\mathcal{M}_{m}$:
\begin{eqnarray*}
\lim_{N\rightarrow\infty} R(\omega^{*},q_{m,\pi})\geq \lim_{N\rightarrow\infty} R(\omega^{*},q_{m}(\cdot|\hat{\theta}_{m})).
\end{eqnarray*}

\end{thm}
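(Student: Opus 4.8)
The plan is to mirror the derivation behind Theorem \ref{Risk_decomp}: first establish the analogous asymptotic expansion of the plug-in risk, and then compare the two expansions term by term. I expect the plug-in expansion to read
\begin{eqnarray*}
R(\omega^{*},q_{m}(\cdot|\hat\theta_{m}))
&=&
\frac{1}{2N}g^{(q)}_{\alpha\beta}(\xi^{*})h^{\alpha}h^{\beta}
+\frac{1}{2}\mathrm{tr}\!\left(g^{(q)}(\theta^{(p)}_{m})g^{(p)-1}_{m}(\theta^{(p)}_{m})\right)+\mathrm{o}(1),
\end{eqnarray*}
so that its two summands are the plug-in counterparts of the two summands in (\ref{KL_NND_Bayes}), with $g^{(q)}(\xi^{*})$ in place of $S(\xi^{*})$ and a trace in place of the log-determinant.

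To obtain this expansion I would Taylor expand $-\log q_{m}(y^{(M)}|\hat\theta_{m})$ about the best approximating point $\theta^{(p)}_{m}$ and take expectations, first over $y^{(M)}$ and then over $x^{(N)}$ under $\omega^{*}$. Three inputs are needed: (i) since $\theta^{(p)}_{m}$ maximizes $\mathrm{E}_{\omega^{*}}[\log p(x^{(N)}|\omega(\theta_{m},0))]$, the $p_{m}$-score at $\theta^{(p)}_{m}$ has mean zero under $\omega^{*}$, so the restricted maximum likelihood estimator satisfies $\mathrm{E}_{\omega^{*}}[\hat\theta_{m}-\theta^{(p)}_{m}]=\mathrm{o}(N^{-1/2})$; (ii) under the local misspecification (\ref{Localmisspec}) the sandwich covariance collapses to first order, so $\mathrm{E}_{\omega^{*}}[(\hat\theta_{m}-\theta^{(p)}_{m})(\hat\theta_{m}-\theta^{(p)}_{m})^{\top}]=g^{(p)-1}_{m}(\theta^{(p)}_{m})+\mathrm{o}(N^{-1})$; and (iii) $\sqrt{N}\{\xi^{*}-\xi(\theta^{(p)}_{m},0)\}=h$. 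The constant-order part, which is the Kullback--Leibler divergence of $q_{m}(\cdot|\theta^{(p)}_{m})$ from $q(\cdot|\omega^{*})$, then contributes $\tfrac{1}{2N}g^{(q)}_{\alpha\beta}(\xi^{*})h^{\alpha}h^{\beta}$ by (iii); the term quadratic in $\hat\theta_{m}-\theta^{(p)}_{m}$, whose $y$-expectation brings down the Fisher information $g^{(q)}(\theta^{(p)}_{m})$, contributes $\tfrac{1}{2}\mathrm{tr}(g^{(q)}(\theta^{(p)}_{m})g^{(p)-1}_{m}(\theta^{(p)}_{m}))$ after using (i) and (ii); and the linear, cubic and higher-order remainders are $\mathrm{o}(1)$ by the moment and contiguity estimates already needed for the proof of Theorem \ref{Risk_decomp}.

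Granting both expansions, the result follows from two elementary matrix inequalities. For the variance-type terms, $|g^{(p)}(\theta^{(p)}_{m})+g^{(q)}(\theta^{(p)}_{m})|/|g^{(p)}(\theta^{(p)}_{m})|=|I_{d_{m}}+g^{(p)-1}_{m}(\theta^{(p)}_{m})g^{(q)}(\theta^{(p)}_{m})|=\prod_{a=1}^{d_{m}}(1+\lambda_{a})$, where $\lambda_{1},\dots,\lambda_{d_{m}}\ge 0$ are the eigenvalues of $g^{(p)-1}_{m}(\theta^{(p)}_{m})g^{(q)}(\theta^{(p)}_{m})$ (nonnegative, since this matrix is similar to a positive semidefinite one); since $\log(1+t)\le t$,
\begin{eqnarray*}
\frac{1}{2}\log\frac{|g^{(p)}(\theta^{(p)}_{m})+g^{(q)}(\theta^{(p)}_{m})|}{|g^{(p)}(\theta^{(p)}_{m})|}
&=&\frac{1}{2}\sum_{a=1}^{d_{m}}\log(1+\lambda_{a})\\
&\le&\frac{1}{2}\sum_{a=1}^{d_{m}}\lambda_{a}
=\frac{1}{2}\mathrm{tr}\!\left(g^{(q)}(\theta^{(p)}_{m})g^{(p)-1}_{m}(\theta^{(p)}_{m})\right).
\end{eqnarray*}
For the bias-type terms, the block matrix $P$ occurring in the definition of $S(\xi^{*})$ is positive semidefinite (its $\theta_{m}$-block is the positive definite $g^{(p)-1}_{m}(\theta^{(p)}_{m})$ and the other blocks vanish), so $S(\xi^{*})^{-1}=g^{(q)-1}(\xi^{*})+P\succeq g^{(q)-1}(\xi^{*})$; as inversion reverses the ordering of positive definite matrices, $S(\xi^{*})\preceq g^{(q)}(\xi^{*})$, hence $S_{\alpha\beta}(\xi^{*})h^{\alpha}h^{\beta}\le g^{(q)}_{\alpha\beta}(\xi^{*})h^{\alpha}h^{\beta}$. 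Adding the two inequalities and invoking Theorem \ref{Risk_decomp} and the plug-in expansion gives $R(\omega^{*},q_{m,\pi})\le R(\omega^{*},q_{m}(\cdot|\hat\theta_{m}))+\mathrm{o}(1)$; since the powers of $N$ carried by the Fisher information matrices cancel within every constant-order term (as in the Remark following Theorem \ref{Risk_decomp}), both risks converge as $N\to\infty$ and the inequality passes to the limit.

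The main obstacle is the plug-in expansion itself, specifically pinning down the local-misspecification asymptotics of $\hat\theta_{m}$ in points (i)--(ii) and verifying that the Taylor remainders are genuinely $\mathrm{o}(1)$ after integration; once that expansion is in hand, the comparison with (\ref{KL_NND_Bayes}) is routine. Both the method and the required estimates parallel the appendix proof of Theorem \ref{Risk_decomp}.
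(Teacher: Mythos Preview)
Your proposal is correct and follows essentially the same route as the paper: the paper also obtains the plug-in expansion
\[
R(\omega^{*},q_{m}(\cdot|\hat\theta_{m}))
=\frac{1}{2N}g^{(q)}_{\alpha\beta}(\xi^{*})h^{\alpha}h^{\beta}
+\frac{1}{2}g^{(p)ab}_{m}(\theta^{(p)}_{m})g^{(q)}_{ab}(\theta^{(p)}_{m})+\mathrm{o}(1)
\]
(via the Taylor expansion and the appendix formula (\ref{MLE_linear_imb_p}) for $\hat\omega_{m}-\omega^{*}$, which packages your points (i)--(iii)), and then compares it with (\ref{KL_NND_Bayes}) using exactly your two matrix inequalities, namely $\log|I+g^{(p)-1}_{m}g^{(q)}|\le \mathrm{tr}(g^{(p)-1}_{m}g^{(q)})$ and $S(\xi^{*})\preceq g^{(q)}(\xi^{*})$. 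The only cosmetic difference is that the paper expands about $\omega^{*}$ rather than about $\theta^{(p)}_{m}$, so the bias and variance contributions emerge together from $\mathrm{E}_{\omega^{*}}[(\omega^{*}-\hat\omega_{m})(\omega^{*}-\hat\omega_{m})^{\top}]$ instead of from separate ``constant part'' and ``quadratic part'' terms.
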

\begin{proof}
From the Taylor expansion
and
from (\ref{MLE_linear_imb_p}) in the appendix,
the Kullback--Leibler risk $R(\omega^{*},q_{m}(\cdot|\hat{\theta}_{m}))$ is expanded as
\begin{eqnarray*}
R(\omega^{*},q_{m}(\cdot|\hat{\theta}_{m}))
&=&
\frac{1}{2}g^{(q)}_{st}(\omega^{*})\mathrm{E}_{\omega^{*}}
[\{\omega^{*s}-\omega^{s}(\hat{\theta}_{m}(x^{(N)}),0)\}
\{\omega^{*t}-\omega^{t}(\hat{\theta}_{m}(x^{(N)}),0)\}]+\mathrm{o}(1)
\nonumber\\
&=&\frac{1}{2N}g^{(q)}_{\alpha\beta}(\xi^{*})h^{\alpha}h^{\beta}
+\frac{1}{2}g^{(q)ab}_{m}(\theta^{(p)}_{m})g^{(p)}_{ab}(\theta^{(p)}_{m})+\mathrm{o}(1).
\end{eqnarray*}
Since the Fisher information matrices $g^{(p)}(\theta^{(p)}_{m})$ and $g^{(q)}(\theta^{(p)}_{m})$ are positive semidefinite,
the following inequality holds:
\begin{eqnarray*}
\log\frac{|g^{(p)}(\theta^{(p)}_{m})+g^{(q)}(\theta^{(p)}_{m})|}{|g^{(p)}(\theta^{(p)}_{m})|}\geq g^{(p)ab}_{m}(\theta^{(p)}_{m})g^{(q)}_{ab}(\theta^{(p)}_{m}).
\end{eqnarray*}
From the inequality that $g^{(q)}(\xi^{*})\succeq S$,
we have
\begin{eqnarray*}
g^{(q)}_{\alpha\beta}(\xi^{*})h^{\alpha}h^{\beta} \geq S_{\alpha\beta}h^{\alpha}h^{\beta},
\end{eqnarray*}
where the binary relation $A \succeq B$ means that $A-B$ is positive semidefinite.
Thus, we complete the proof.
\end{proof}

\begin{rem}
This theorem implies that we should use the Bayesian predictive distribution for multistep ahead predictions
instead of the plug-in predictive distribution from the viewpoint of Kullback--Leibler risk.
Thus,
we consider the information criteria when we use the Bayesian predictive distribution in the selected model.
In one-step ahead prediction,
it is well-known that 
the Bayesian predictive distribution has smaller Kullback--Leibler risk
than the plug-in predictive distribution up to the $N^{-2}$ order.
See \citet{Komaki(1996)}, \citet{Hartigan(1998)}, and \citet{Komaki(2014)}.
\citet{KonishiandKitagawa(2003)} construct information criteria
when using the Bayesian predictive distribution in one-step ahead predictions.
\end{rem}

\begin{rem}
The result is related to the prediction in the locally asymptotically mixed normal (LAMN) models as follows:
due to the LAMN property,
we consider the prediction of the target variables based on the data
conditioning on the two Fisher information matrices of the data and the target variables.
In our setting,
we also consider the prediction of the target variables based on the data
conditioning on the two Fisher information matrices of the data and the target variables.
Indeed, the Kullback--Leibler risk 
of the Bayesian predictive distributions (\ref{KL_NND_Bayes}) 
has the same form as (2) in \citet{SeiandKomaki(2007)}.
\end{rem}

\section{Information criteria for multistep ahead predictions}

On the basis of the results in the previous section,
we construct an information criterion by using an asymptotically unbiased estimator of the Kullback--Leibler risk.

\begin{thm}
\label{Unbiased}
Let $\hat{R}(m)$ be an estimator of the Kullback--Leibler risk of the Bayesian predictive distribution
in submodel $\mathcal{M}_{m}$ given by
\begin{eqnarray}
\hat{R}(m)&=&
\frac{1}{2N}
\hat{S}_{\alpha\beta}\hat{h}^{\alpha}\hat{h}^{\beta}
+\frac{1}{2}\hat{S}_{ab}g^{(p)ab}_{m}(\hat{\theta}_{m})
-\frac{1}{2}\hat{S}_{\alpha\beta}g^{(p)\alpha\beta}(\hat{\xi})
\nonumber\\
&&+\frac{1}{2}\log\frac{|g^{(p)}(\hat{\theta}_{m})+g^{(q)}(\hat{\theta_{m}})|}{|g^{(p)}(\hat{\theta}_{m})|},
\end{eqnarray}
where $\hat{S}_{\alpha\beta}$ is the $(\alpha,\beta)$-component of the matrix given by
\begin{eqnarray*}
\hat{S}&=&\left(g^{(q)-1}(\hat{\xi})+\begin{pmatrix}g_{m}^{(p)-1}(\hat{\theta}_{m}) & \bigzerou^{\top}_{(d_{\mathrm{full}}-d_{m})\times d_{m}} \\
 \bigzerol_{(d_{\mathrm{full}}-d_{m})\times d_{m}} & \bigzerol_{(d_{\mathrm{full}}-d_{m})\times (d_{\mathrm{full}}-d_{m})} \end{pmatrix}\right)^{-1}
\end{eqnarray*}
and for $\alpha\in\{1,\ldots,d_{\mathrm{full}}\}$,
$\hat{h}^{\alpha}$ is given by $\hat{h}^{\alpha}/\sqrt{N}=\hat{\xi}^{\alpha}-\xi^{\alpha}(\hat{\theta}_{m},0)$.
Assume that the true parameter point $\xi^{*}$ and submodel $\mathcal{M}_{m}$ satisfy (\ref{Localmisspec}).
Then, $\hat{R}(m)$ is an asymptotically unbiased estimator 
of the Kullback--Leibler risk $R(\omega^{*},q_{m,\pi})$.
\end{thm}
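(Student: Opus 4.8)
\noindent\emph{Proof plan.} The plan is to take $\mathrm{E}_{\omega^{*}}$ of $\hat R(m)$ term by term and match it, up to $\mathrm{o}(1)$, with the expansion of $R(\omega^{*},q_{m,\pi})$ in Theorem~\ref{Risk_decomp}, using throughout that the Fisher information matrices of $p(x^{(N)}|\cdot)$ and $q(y^{(M)}|\cdot)$ are of order $N$, so that $g^{(p)-1},g^{(q)-1}$ are of order $N^{-1}$ while $S(\xi^{*})$ and $\hat S$ are of order $N$. First I would dispose of the plug-in factors. Under the regularity assumptions $\hat\xi\to\xi^{*}$ and $\hat\theta_{m}\to\theta^{(p)}_{m}$ in probability, the local-misspecification gap being $\mathrm{O}(N^{-1/2})$; hence by continuity of the Fisher-information functionals and uniform integrability the last term of $\hat R(m)$ has expectation $\tfrac12\log\{|g^{(p)}(\theta^{(p)}_{m})+g^{(q)}(\theta^{(p)}_{m})|/|g^{(p)}(\theta^{(p)}_{m})|\}+\mathrm{o}(1)$, exactly the second summand of (\ref{KL_NND_Bayes}), and $\hat S$, $g^{(p)ab}_{m}(\hat\theta_{m})$, $g^{(p)\alpha\beta}(\hat\xi)$ may be replaced by $S(\xi^{*})$, $g^{(p)ab}_{m}(\theta^{(p)}_{m})$, $g^{(p)\alpha\beta}(\xi^{*})$ at the cost of $\mathrm{o}(1)$. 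It then remains to show
\[
\frac{1}{2N}S_{\alpha\beta}(\xi^{*})\,\mathrm{E}_{\omega^{*}}[\hat h^{\alpha}\hat h^{\beta}]+\frac12 S_{ab}(\xi^{*})g^{(p)ab}_{m}(\theta^{(p)}_{m})-\frac12 S_{\alpha\beta}(\xi^{*})g^{(p)\alpha\beta}(\xi^{*})=\frac{1}{2N}S_{\alpha\beta}(\xi^{*})h^{\alpha}h^{\beta}+\mathrm{o}(1).
\]

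The core step is the second moment of $\hat h$. Using the linear expansions of the full and restricted maximum likelihood estimators, (\ref{MLE_linear_imb_p}) in the appendix, I would write $\hat h^{\alpha}=h^{\alpha}+\zeta^{\alpha}+\mathrm{o}_{p}(1)$, where $\zeta^{\alpha}=\sqrt N\{g^{(p)\alpha\gamma}(\xi^{*})-\tilde g^{\alpha\gamma}\}\,\partial_{\gamma}\log p(x^{(N)}|\xi^{*})$ and $\tilde g$ is the submodel's inverse Fisher information $g_{m}^{(p)-1}(\theta^{(p)}_{m})$ padded by zeros --- the matrix displayed inside $S$ in Theorem~\ref{Risk_decomp}. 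Two facts enter: $\theta^{(p)}_{m}$ is determined by a first-order condition, so the restricted-model score $\partial_{b}\log p(x^{(N)}|\xi(\theta^{(p)}_{m},0))$ has exact mean zero, giving $\mathrm{E}_{\omega^{*}}[\hat h^{\alpha}]=h^{\alpha}+\mathrm{o}(1)$ (the residual $\sqrt N$-scaled bias being $\mathrm{O}(N^{-1/2})$); and the score at the true point $\xi^{*}$ has variance exactly $g^{(p)}(\xi^{*})$. Writing $P=g^{(p)-1}(\xi^{*})$ and using $Pg^{(p)}(\xi^{*})=I$ with the block structure, one gets, with $\mathrm{L}^{2}$-control of the remainder,
\[
\mathrm{E}_{\omega^{*}}[\hat h^{\alpha}\hat h^{\beta}]=h^{\alpha}h^{\beta}+N\{P-2\tilde g+\tilde g\,g^{(p)}(\xi^{*})\,\tilde g\}^{\alpha\beta}+\mathrm{o}(1),
\]
and since $S(\xi^{*})=\mathrm{O}(N)$ the variance bracket contributes at constant order after multiplication by $1/(2N)$, so it may not be dropped.

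Finally I would check that the two remaining terms of $\hat R(m)$ cancel this contribution. Because $S_{ab}(\xi^{*})g^{(p)ab}_{m}(\theta^{(p)}_{m})=S_{\alpha\beta}(\xi^{*})\tilde g^{\alpha\beta}$ and $S_{\alpha\beta}(\xi^{*})g^{(p)\alpha\beta}(\xi^{*})=S_{\alpha\beta}(\xi^{*})P^{\alpha\beta}$, adding them to $\tfrac1{2N}S_{\alpha\beta}(\xi^{*})\mathrm{E}_{\omega^{*}}[\hat h^{\alpha}\hat h^{\beta}]$ makes the $P$-terms and the $-2\tilde g$, $+\tilde g$ terms telescope, leaving $\tfrac1{2N}S_{\alpha\beta}(\xi^{*})h^{\alpha}h^{\beta}+\tfrac12 S_{\alpha\beta}(\xi^{*})\{\tilde g\,g^{(p)}(\xi^{*})\,\tilde g-\tilde g\}^{\alpha\beta}+\mathrm{o}(1)$. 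The local-misspecification assumption gives $g^{(p)}_{ab}(\xi^{*})=g^{(p)}_{ab}(\theta^{(p)}_{m})+\mathrm{O}(N^{1/2})$, so the padded-inverse identity $\tilde g\,g^{(p)}(\xi^{*})\,\tilde g=\tilde g+\mathrm{O}(N^{-3/2})$ holds --- the product involves only the $\theta_{m}$-block of $g^{(p)}(\xi^{*})$, which equals $g^{(p)}(\theta^{(p)}_{m})$ with relative error $\mathrm{O}(N^{-1/2})$, and $g_{m}^{(p)-1}(\theta^{(p)}_{m})\,g^{(p)}(\theta^{(p)}_{m})\,g_{m}^{(p)-1}(\theta^{(p)}_{m})=g_{m}^{(p)-1}(\theta^{(p)}_{m})$; multiplying by $S(\xi^{*})=\mathrm{O}(N)$ renders the last term $\mathrm{o}(1)$. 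Collecting everything yields $\mathrm{E}_{\omega^{*}}[\hat R(m)]=R(\omega^{*},q_{m,\pi})+\mathrm{o}(1)$.

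The step I expect to be the main obstacle is the joint asymptotic analysis of the full and restricted maximum likelihood estimators under local misspecification --- establishing $\hat h^{\alpha}=h^{\alpha}+\zeta^{\alpha}+\mathrm{o}_{p}(1)$ with exactly the stated mean and covariance, and supplying the uniform-integrability arguments that turn these stochastic expansions into statements about expectations. Once that is in place, the telescoping of the $P$- and $\tilde g$-terms and the padded-inverse identity are routine linear algebra.
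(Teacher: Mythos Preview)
Your proposal is correct and follows essentially the same approach as the paper: both arguments replace $\hat S$ by $S(\xi^{*})$ and the log-determinant by its population value at cost $\mathrm{o}(1)$, linearize $\hat h^{\alpha}$ using the expansions of the full and restricted maximum-likelihood estimators (the paper's equation (\ref{est_h_decomp})), compute the second moment $\mathrm{E}_{\omega^{*}}[\hat h^{\alpha}\hat h^{\beta}]$, and observe that the two correction terms $\tfrac12 S_{ab}g^{(p)ab}_{m}$ and $-\tfrac12 S_{\alpha\beta}g^{(p)\alpha\beta}$ cancel the variance contributions, leaving $\tfrac{1}{2N}S_{\alpha\beta}h^{\alpha}h^{\beta}$. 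The only cosmetic difference is that the paper evaluates the restricted score at $\omega^{(p)}$ rather than at $\xi^{*}$, so the identity $g^{(p)ab}_{m}\,g^{(p)}_{bc}(\theta^{(p)}_{m})\,g^{(p)cd}_{m}=g^{(p)ad}_{m}$ holds exactly and your padded-inverse step is absorbed into the expansion; your version recovers the same conclusion via the $\mathrm{O}(N^{-3/2})$ bound you state.
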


The proof is given in the appendix.

From Theorem \ref{Unbiased},
we propose the following model selection criterion as the multistep predictive information criterion ($\mathrm{MSPIC}$):
\begin{eqnarray*}
\mathrm{MSPIC}(m)&=&2\hat{R}(m)\nonumber\\
&=&\frac{1}{N}\hat{S}_{\alpha\beta}\hat{h}^{\alpha}\hat{h}^{\beta}
+\hat{S}_{ab}g^{(p)ab}_{m}(\hat{\theta}_{m})
-\hat{S}_{\alpha\beta}g^{(p)\alpha\beta}(\hat{\xi})
+\log\frac{|g^{(p)}(\hat{\theta}_{m})+g^{(q)}(\hat{\theta}_{m})|}{|g^{(p)}(\hat{\theta}_{m})|}.
\end{eqnarray*}
Here, we multiply $\hat{R}(m)$ by 2 to make the definition consistent with $\mathrm{AIC}$ (\citealp{Akaike(1973)}).
If two Fisher information matrices $g^{(p)}(\theta_{m})$ and $g^{(q)}(\theta_{m})$ are identical,
$\mathrm{MSPIC}$ coincides with PIC (\citealp{Kitagawa(1997)})
when using the uniform prior and with predictive likelihood (\citealp{Akaike(1980)}).

We also consider the bootstrap adjustment of $\mathrm{MSPIC}$.
First,
we generate $B$ bootstrap samples $x^{(N)}_{1},\ldots,x^{(N)}_{b},\ldots,x^{(N)}_{B}$ 
via a parametric or non-parametric bootstrap method using the full model.
Second, for each $b$ in $\{1,\ldots,B\}$, we calculate
the value of $\mathrm{MSPIC}_{1}(m;x^{(N)}_{b})$
where
$\mathrm{MSPIC}_{1}(m;x^{(N)}_{b})$ is the value of
\begin{eqnarray*}
\frac{1}{N}\hat{S}_{\alpha\beta}\hat{h}^{\alpha}\hat{h}^{\beta}
+\hat{S}_{ab}g^{(p)ab}_{m}(\hat{\theta}_{m})
-\hat{S}_{\alpha\beta}g^{(p)\alpha\beta}(\hat{\xi})
\end{eqnarray*}
using $x^{(N)}_{b}$ instead of $x^{(N)}$.
Finally,
we obtain
\begin{eqnarray*}
	\mathrm{MSPIC}_{\mathrm{BS}}(m)
	&=&
	\frac{1}{B}\Sigma_{b=1}^{B}\mathrm{MSPIC}_{1}(m;x^{(N)}_{b})+\log\frac{|g^{(p)}(\hat{\theta}_{m})+g^{(q)}(\hat{\theta}_{m})|}{|g^{(p)}(\hat{\theta}_{m})|}.
\end{eqnarray*}
Consider the first three terms in the definition of $\mathrm{MSPIC}$.
These terms are an asymptotically unbiased estimator of $S_{\alpha\beta}h^{\alpha}h^{\beta}/N$.
However,
this estimator may have excessive variance 
because the matrix $\hat{S}$ is not equal to the asymptotic variance of $\hat{h}^{\alpha}$.
To avoid the excessive variance of the estimator, we use the bootstrap method.
\citet{LvandLiu(2014)} applied the bootstrap adjustment of $\mathrm{TIC}$ (\citealp{Takeuchi(1976)}).

\section{Numerical experiments}

We show that the proposed information criteria are effective for the multistep ahead prediction
through two numerical experiments.
After the model selections by $\mathrm{AIC}$, $\mathrm{PIC}$, $\mathrm{MSPIC}$,
and its bootstrap adjustment $\mathrm{MSPIC}_{\mathrm{BS}}$,
we evaluate the predictive performance of the selected models as follows:
the derivation of $\mathrm{AIC}$ is based on the plug-in predictive distribution with the maximum likelihood.
In contrast,
those of $\mathrm{PIC}$, $\mathrm{MSPIC}$, and $\mathrm{MSPIC}_{\mathrm{BS}}$ 
are based on the Bayesian predictive distribution.
Thus,
the predictive performance of the $\mathrm{AIC}$-best model
is evaluated by the goodness of the plug-in predictive distribution $q_{m}(\cdot|\hat{\theta}_{m})$
in the $\mathrm{AIC}$-best model.
In contrast,
the predictive performance of the $\mathrm{PIC}$-best, the $\mathrm{MSPIC}$-best, and the $\mathrm{MSPIC}_{\mathrm{BS}}$-best models
is evaluated by the goodness of the Bayesian predictive distributions $q_{m,\pi}(\cdot|\cdot)$
in the $\mathrm{PIC}$-best, the $\mathrm{MSPIC}$-best, and the $\mathrm{MSPIC}_{\mathrm{BS}}$-best models.

We consider the empirical goodness of the predictive distribution as follows.
We generate the data and the target variables $R$ times
and calculate the mean of minus $\log$ predictive densities 
$-\mathop{\Sigma}_{r=1}^{R}\log \hat{q}(y^{(M)}_{r}|x^{(N)}_{r})$
of each information criterion.
Here, for $r=1,\ldots,R$,
$x^{(N)}_{r}$ and $y^{(M)}_{r}$ are the $r$-th data and the $r$-th target variables.
It is preferable 
that the value $-\mathop{\Sigma}_{r=1}^{R}\log \hat{q}(y^{(M)}_{r}|x^{(N)}_{r})$ is small
because it is an estimator of the Kullback--Leibler risk up to the term related to the predictive distribution.
We set $R=100$ in the first numerical experiment
and $R=10$ in the second numerical experiment.

\subsection{The extrapolation in the curve fitting}
First, consider the extrapolation in the curve fitting in the introduction.
For $m\in\{1,\ldots,d_{\mathrm{full}}\}$,
the data and the target variables in the $m$-th model are given by
\begin{eqnarray*}
	x^{(N) \top}=\Phi_{m}\theta_{m}+\epsilon_{N\times N}
	&\mathrm{and}&
	y^{(M) \top}=\tilde{\Phi}_{m}\theta_{m}+\tilde{\epsilon}_{M\times M},
\end{eqnarray*}
where $\Phi_{m}$ and $\tilde{\Phi}_{m}$ are design matrices defined by
\begin{eqnarray*}
\Phi_{m}=
\begin{pmatrix}
\phi_{1}(z_{1}) & \ldots & \phi_{d_{m}}(z_{1}) \\
\ldots          & \ldots & \ldots \\
\phi_{1}(z_{N}) & \ldots & \phi_{d_{m}}(z_{N})
\end{pmatrix}
&\mathrm{and}&
\tilde{\Phi}_{m}=
\begin{pmatrix}
\phi_{1}(z_{N+1}) & \ldots & \phi_{d_{m}}(z_{N+1}) \\
\ldots          & \ldots & \ldots \\
\phi_{1}(z_{N+M}) & \ldots & \phi_{d_{m}}(z_{N+M})
\end{pmatrix},
\end{eqnarray*}
respectively.
For simplicity, we denote $\Phi_{d_{\mathrm{full}}}$, $\tilde{\Phi}_{d_\mathrm{full}}$, and $\theta_{d_{\mathrm{full}}}$ by $\Phi$, $\tilde{\Phi}$, and $\theta$,
respectively.
We denote the maximum likelihood estimator of $\theta$ by $\hat{\theta}$.

The information criteria $\mathrm{AIC}$, $\mathrm{PIC}$, and $\mathrm{MSPIC}$ are given by
\begin{eqnarray}
\mathrm{AIC}(m)
&=&
(\hat{\theta}-\begin{pmatrix}\hat{\theta}_{m} \\ 0\end{pmatrix})^{\top}
S_{\mathrm{AIC}}
(\hat{\theta}-\begin{pmatrix}\hat{\theta}_{m} \\ 0\end{pmatrix})
+2d_{m}-d_{\mathrm{full}},
\end{eqnarray}

\begin{eqnarray}
\mathrm{PIC}(m)&=&
(\hat{\theta}-\begin{pmatrix}\hat{\theta}_{m} \\ 0 \end{pmatrix})^{\top}
S_{\mathrm{PIC}}
(\hat{\theta}-\begin{pmatrix}\hat{\theta}_{m} \\ 0 \end{pmatrix})
	+d_{m}\log 2+d_{m}-d_{\mathrm{full}},
\end{eqnarray}
and
\begin{eqnarray}
\mathrm{MSPIC}(m)&=&
(\hat{\theta}-\begin{pmatrix}\hat{\theta}_{m} \\ 0 \end{pmatrix})^{\top}
S_{\mathrm{MSPIC}}
(\hat{\theta}-\begin{pmatrix}\hat{\theta}_{m} \\ 0 \end{pmatrix})
+\log\frac{|\Phi_{m}^{\top}\Phi_{m}+\tilde{\Phi}_{m}^{\top}\tilde{\Phi}_{m}|}{|\Phi_{m}^{\top}\Phi_{m}|}
\nonumber\\
&&+\mathrm{tr}\begin{pmatrix} \sigma^{2} (\Phi_{m}^{\top}\Phi_{m})^{-1} & \bigzerou^{\top}_{(d_{\mathrm{full}}-d_{m})\times d_{m}} 
\\ \bigzerol_{(d_{\mathrm{full}}-d_{m})\times d_{m}} & \bigzerol_{(d_{\mathrm{full}}-d_{m})\times (d_{\mathrm{full}}-d_{m})} \end{pmatrix}S_{\mathrm{MSPIC}}
\nonumber\\
&&-\mathrm{tr}(\Phi^{\top}\Phi)^{-1}S_{\mathrm{MSPIC}},
\end{eqnarray}
where $S_{\mathrm{AIC}}$, $S_{\mathrm{PIC}}$, and $S_{\mathrm{MSPIC}}$ are given by
\begin{eqnarray}
S_{\mathrm{AIC}}&=&\frac{1}{\sigma^{2}}\Phi^{\top}\Phi,
\end{eqnarray}
\begin{eqnarray}
S_{\mathrm{PIC}}&=&\frac{1}{\sigma^{2}}\left((\Phi^{\top}\Phi)^{-1}
+\begin{pmatrix}(\Phi_{m}^{\top}\Phi_{m})^{-1} & \bigzerou^{\top}_{(d_{\mathrm{full}}-d_{m})\times d_{m}}  \\ \bigzerol_{(d_{\mathrm{full}}-d_{m})\times d_{m}} & \bigzerol_{(d_{\mathrm{full}}-d_{m})\times (d_{\mathrm{full}}-d_{m})}  \end{pmatrix}\right)^{-1},
\end{eqnarray}
and
\begin{eqnarray}
S_{\mathrm{MSPIC}}&=&\frac{1}{\sigma^{2}}\left(
(\tilde{\Phi}^{\top}\tilde{\Phi})^{-1}+\begin{pmatrix}(\Phi_{m}^{\top}\Phi_{m})^{-1} & \bigzerou^{\top}_{(d_{\mathrm{full}}-d_{m})\times d_{m}}  
\\ \bigzerol_{(d_{\mathrm{full}}-d_{m})\times d_{m}}  & \bigzerol_{(d_{\mathrm{full}}-d_{m})\times (d_{\mathrm{full}}-d_{m})} \end{pmatrix}\right)^{-1},
\end{eqnarray}
respectively.

As the sets of functions $\{\phi_{a}\}_{a=1}^{d_{\mathrm{full}}}$,
we use
trigonometric functions $\{\phi_{\mathrm{tri},a}\}_{a=1}^{d_{\mathrm{full}}}$:
\begin{eqnarray*}
\phi_{\mathrm{tri},a}(z)&=&
\begin{cases}
1 & (a=1), \\
\sqrt{2}\cos(2\pi\frac{a}{2}z) & (a:\mathrm{even}),\\
\sqrt{2}\sin(2\pi\frac{a-1}{2}z) & (a:\mathrm{odd}).
\end{cases}
\end{eqnarray*}
For all $i\in\{1,\ldots,N+M\}$,
we design $z_{i}$ as $\alpha \times(i/N)$ where $\alpha$ is in [0,1].

We generate the data and the target variables as follows:
\begin{eqnarray*}
x^{(N) \top}=\begin{pmatrix}f(z_{1})\\ f(z_{2}) \\ \ldots \\ f(z_{N}) \end{pmatrix}
+\epsilon_{N\times N}
&\mathrm{and}&
y^{(M) \top}=\begin{pmatrix}f(z_{N+1})\\ f(z_{N+2}) \\ \ldots \\ f(z_{M}) \end{pmatrix}
+\tilde{\epsilon}_{M\times M}.
\end{eqnarray*}
In this experiment,
we compare 
the minus log plug-in predictive distribution 
with the maximum likelihood estimator in the $\mathrm{AIC}$-best model
and
the minus log Bayesian predictive distribution with the uniform prior
given by
\begin{eqnarray*}
-\log q_{m,\pi}(y^{(M)}|x^{(N)})&=&
\frac{1}{2\sigma^{2}}
\left\vert
\begin{pmatrix}x^{(N)\top} \\ y^{(M)\top}\end{pmatrix}
-\begin{pmatrix}\Phi_{m}\\ \tilde{\Phi}_{m}\end{pmatrix}\hat{\theta}_{m}(x^{(N)},y^{(M)})
\right\vert^{2}
-\frac{1}{2\sigma^{2}}
\left\vert
x^{(N)\top}-\Phi_{m}\hat{\theta}_{m}(x^{(N)})
\right\vert^{2}
\nonumber\\
&&+\frac{M}{2}\log(2\pi\sigma^{2})
+\frac{1}{2}\log\frac{\vert\Phi_{m}^{\top}\Phi_{m}+\tilde{\Phi}_{m}^{\top}\tilde{\Phi}_{m}\vert}{\vert\Phi_{m}^{\top}\Phi_{m}\vert}
\end{eqnarray*}
of the $\mathrm{PIC}$-best, the $\mathrm{MSPIC}$-best, and the $\mathrm{MSPIC}_{\mathrm{BS}}$-best models.
Here, we denote the maximum likelihood estimator of $r_{m}(x^{(N)},y^{(M)}|\theta_{m})$ by $\hat{\theta}_{m}$.

\begin{table}[t]
\caption{The mean of the minus $\log$ predictive densities 
when the true function is $f_{1}$ and $\alpha$ is 1. The lowest value in each row is underlined.}
\centering
\begin{tabular}{r|rrrr}
  \hline
$N$ and $M$ & $\mathrm{AIC}$ & $\mathrm{PIC}$ & $\mathrm{MSPIC}$ & $\mathrm{MSPIC}_{\mathrm{BS}}$ \\ 
  \hline
100 and 100 & $-4.43$ & $-8.71$ & $-8.71$ & \underline{$-9.11$} \\ 
100 and 200 & $-9.52$ & $-21.84$ & $-22.20$ & \underline{$-23.04$} \\ 
100 and 500 & $-19.26$ & $-62.33$ & $-65.96$ & \underline{$-67.51$} \\ 
100 and 1000 & $-40.93$ & $-139.66$ & $-150.30$ & \underline{$-152.55$} \\ 
   \hline
\end{tabular}
\label{Table:trigonoseries:alpha1}
\end{table}

\begin{table}[t]
\caption{The mean of the minus $\log$ predictive densities 
when the true function is $f_{2}$ and $\alpha$ is 1. The lowest value in each row is underlined.}
\centering
\begin{tabular}{r|rrrr}
  \hline
$N$ and $M$ & $\mathrm{AIC}$ & $\mathrm{PIC}$ & $\mathrm{MSPIC}$ & $\mathrm{MSPIC}_{\mathrm{BS}}$ \\ 
  \hline
100 and 100 & $-11.44$ & $-13.28$ & $-13.28$ & \underline{$-13.57$} \\ 
100 and 200 & $-21.53$ & $-28.08$ & $-28.32$ & \underline{$-28.58$} \\ 
100 and 500 & $-60.21$ & $-79.27$ & $-79.94$ & \underline{$-81.73$} \\ 
100 and 1000 & $-116.74$ & $-158.14$ & $-161.33$ & \underline{$-165.81$} \\ 
   \hline
\end{tabular}
\label{Table:infiniteseries:alpha1}
\end{table}

\begin{table}[t]
\caption{The mean of the minus $\log$ predictive densities when the true function is $f_{2}$ and $\alpha$ is 0.9. The lowest value in each row is underlined.}
\centering
\begin{tabular}{r|rrrr}
  \hline
$N$ and $M$ & $\mathrm{AIC}$ & $\mathrm{PIC}$ & $\mathrm{MSPIC}$ & $\mathrm{MSPIC}_{\mathrm{BS}}$ \\ 
  \hline
100 and 100 & $-8.91$ & \underline{$-13.48$} & $-12.98$ & $-13.23$ \\
100 and 200 & $-14.88$ & $-27.08$ & $-26.98$ & \underline{$-27.38$} \\
100 and 500 & $-20.99$ & $-68.47$ & $-70.99$ & \underline{$-72.98$} \\
100 and 1000 & $-75.39$ & $-154.72$ & $-158.20$ & \underline{$-163.44$} \\
   \hline
\end{tabular}
\label{Table:infiniteseries:alpha09}
\end{table}

First, we consider the setting where the true function $f_{1}$ is given by
\begin{eqnarray*}
f_{1}(z)&=&2\sin(2\pi\times z)+0.2\sin(2\pi\times 4z)\\
	    &&+0.1\sin(2\pi\times 8z)+0.1\sin(2\pi\times 12z),
\end{eqnarray*}
where $\sigma^{2}=(0.2)^2$ and $\alpha=1.0$.
We let $d_{\mathrm{full}}=31$.
Table \ref{Table:trigonoseries:alpha1} shows that 
$\mathrm{MSPIC}_{\mathrm{BS}}$ has the lowest value,
regardless of $N$ and $M$ when $\alpha$ is 1.

Second, we consider the setting where the true function $f_{2}$ is given by
\begin{eqnarray*}
f_{2}(z)=\frac{\pi^2}{6}-\frac{\pi}{2}(z\,\mathrm{mod}\,2\pi)+\frac{1}{4}(z\,\mathrm{mod}\,2\pi)^2.
\end{eqnarray*}
We set $\sigma^2=(0.2)^2$ and $d_{\mathrm{full}}=16$.
We consider the settings with $\alpha=1$ and $\alpha=0.9$.
Table \ref{Table:infiniteseries:alpha1} shows
that 
when $\alpha$ is 1,
$\mathrm{MSPIC}_{\mathrm{BS}}$ has the lowest value of the minus log predictive distribution,
regardless of the ratio of $N$ and $M$.
Table \ref{Table:infiniteseries:alpha09} shows that
when $\alpha$ is 0.9,
$\mathrm{MSPIC}_{\mathrm{BS}}$ has the lowest value
except when $N$ and $M$ are 100 and 100, respectively.

There is difference between the first and second settings.
In the first setting,
the true function $f_{1}$ is included in the full model.
In the second setting,
the true function $f_{2}$ is not included in the full model.
See \citet{Shibata(1981)} for details related to the second setting.
However,
the experiments indicate 
that $\mathrm{MSPIC}_{\mathrm{BS}}$ works well in both settings
and
that the dominance of $\mathrm{MSPIC}_{\mathrm{BS}}$ is enlarged as the ratio of $N$ and $M$ grows.

\subsection{Normal regression model with an unknown variance}

Next, consider the normal regression model with an unknown variance.
We consider the full model given by
\begin{eqnarray*}
x^{(N)\top}=\Phi\theta+\sigma\epsilon_{N\times N}
&\mathrm{and}&
y^{(M)\top}=\tilde{\Phi}\theta+\sigma\tilde{\epsilon}_{M\times M},
\end{eqnarray*}
respectively.
Here, $\Phi$ and $\tilde{\Phi}$ are $N\times 10$ and $M\times 10$ design matrices, respectively.
The parameters $\theta$ and $\sigma$ are unknown.
We consider 511 submodels given by the models with the restriction that some components of $\theta$ vanish.
We denote the design matrix in the $m$-th model by $\Phi_{m}$ 
and denote the $m$-th model $\mathcal{M}_{m}$ by
\begin{eqnarray*}
x^{(N)\top}=\Phi_{m}\theta_{m}+\sigma\epsilon_{N\times N}
&\mathrm{and}&
y^{(M)\top}=\tilde{\Phi}_{m}\theta_{m}+\sigma\tilde{\epsilon}_{M\times M},
\end{eqnarray*}
respectively.

\begin{table}[t]
\caption{The mean of the minus $\log$ predictive densities in the setting 
where the parameter $\lambda$ is 1, 10, 50, and 100
and
the sample sizes $N$ and $M$ are 50 and 250, respectively.
The lowest value in each row is underlined.}
\centering
\begin{tabular}{r|rrrr}
  \hline
$\lambda$ & $\mathrm{AIC}$ & $\mathrm{PIC}$ & $\mathrm{MSPIC}$ & $\mathrm{MSPIC}_{BS}$ \\ 
  \hline
  1 & $-176.77$ & $-201.92$ & $-202.07$ & \underline{$-205.40$} \\ 
  10 & $-126.97$ & \underline{$-211.60$} & $22.55$ & $-209.33$ \\ 
  50 & $1176.34$ & $-180.16$ & $544.08$ & \underline{$-188.78$} \\ 
  100 & $5496.64$ & $-75.54$ & $750.14$ & \underline{$-180.80$} \\ 
  150 & $14922.99$ & $-75.41$ & $871.94$ & \underline{$-178.16$} \\ 
  200 & $33812.08$ & $38.62$ & $957.92$ & \underline{$-182.71$} \\ 
   \hline
\end{tabular}
\label{Table:varianceunknown_lambdachange:N=50}
\end{table}

\begin{table}[t]
\caption{The mean of the minus $\log$ predictive densities in the setting 
where the parameter $\lambda$ is 1, 10, 50, and 100
and
the sample sizes $N$ and $M$ are 100 and 500, respectively.
The lowest value in each row is underlined.}
\centering
\begin{tabular}{r|rrrr}
  \hline
$\lambda$ & $\mathrm{AIC}$ & $\mathrm{PIC}$ & $\mathrm{MSPIC}$ & $\mathrm{MSPIC}_{BS}$ \\ 
  \hline
  1 & $-418.87$ & \underline{$-438.15$} & \underline{$-438.15$} & $-436.18$ \\ 
  10 & $-361.78$ & $-418.92$ & \underline{$-419.22$} & $-416.92$ \\ 
  50 & $124.53$ & $-408.42$ & $-408.42$ & \underline{$-420.98$} \\ 
  100 & $2273.35$ & $-340.89$ & $1287.12$ & \underline{$-405.19$} \\ 
  150 & $4437.98$ & $-285.04$ & $1528.31$ & \underline{$-392.05$} \\ 
  200 & $9491.38$ & $-191.91$ & $1698.95$ & \underline{$-406.93$} \\
   \hline
\end{tabular}
\label{Table:varianceunknown_lambdachange:N=100}
\end{table}

We set $N=50$ and $M=250$.
In this setting,
we generate the full design matrices given by
\begin{eqnarray*}
\Phi=\Phi_{r} &\,\mathrm{and}\,&
 \tilde{\Phi}=\begin{pmatrix}\Phi_{r} \\ \Phi_{r} \\ \ldots \\ \Phi_{r}\end{pmatrix}
+\lambda \begin{pmatrix}I_{10\times 10} \\ \bigzerol_{(M-10) \times 10} \end{pmatrix},
\end{eqnarray*}
where $\Phi_{r}$ is given randomly and $\lambda$ is the parameter.
Here, $I_{10\times 10}$ is the $10\times 10$ identity matrix 
and
$\bigzerol_{(M-10)\times 10}$ is the $(M-10)\times 10$ zero matrix.

We compare the minus log plug-in predictive distribution given by
\begin{eqnarray*}
-\log q_{m}(y^{(M)}|\hat{\theta}_{m}(x^{(N)}))&=&\frac{M}{2}\log(2\pi)
+\frac{M}{2}\log(\vert x^{(N)\top}-\Phi_{m}(\Phi_{m}^{\top}\Phi_{m})^{-1}\Phi_{m}^{\top}x^{(N)\top} \vert^{2}/N)
\nonumber\\
&&+\frac{1}{2}\frac{\vert y^{(M)\top}-\tilde{\Phi}_{m}(\Phi_{m}^{\top}\Phi_{m})^{-1}\Phi_{m}^{\top}x^{(N)} \vert^{2}}{\vert
x^{(N)\top}-\Phi_{m}(\Phi_{m}^{\top}\Phi_{m})^{-1}\Phi_{m}^{\top}x^{(N)}\vert^{2}/N}
\end{eqnarray*}
of the $\mathrm{AIC}$-best model
and
the minus log Bayesian predictive distribution with $\pi(\theta_{m},\sigma)=1/\sigma$
given by
\begin{eqnarray*}
-\log q_{m,\pi}(y^{(M)}|x^{(N)})&=&\frac{N+M-d_{m}}{2}
\log\left(\left\vert\begin{pmatrix}x^{(N)\top} \\ y^{(M)\top} \end{pmatrix}-\begin{pmatrix}\Phi_{m} \\ \tilde{\Phi}_{m}\end{pmatrix} \hat{\theta}_{m}(x^{(N)},y^{(M)})\right\vert^{2}\right)
\nonumber\\
&&-\frac{N-d_{m}}{2}
\log\left(\left\vert x^{(N)\top}-\Phi_{m}\hat{\theta}_{m}(x^{(N)})\right\vert^{2}\right)
\nonumber\\
&&+\frac{1}{2}\log\frac{\vert\Phi_{m}^{\top}\Phi_{m}+\tilde{\Phi}_{m}^{\top}\tilde{\Phi}_{m}\vert}{\vert\Phi_{m}^{\top}\Phi_{m}\vert}
-\log \frac{\Gamma(\frac{M+N-d_{m}}{2})}{\Gamma(\frac{N-d_{m}}{2})}
\end{eqnarray*}
of the $\mathrm{PIC}$-best, the $\mathrm{MSPIC}$-best, and the $\mathrm{MSPIC}_{\mathrm{BS}}$-best models.
The choice of the prior distribution is asymptotically irrelevant according to Theorem \ref{Risk_decomp}.
The reason why we use the above Bayesian distribution is
because it is mini-max under the Kullback--Leibler risk.
See \citet{LiangandBarron(2004)}.
Tables \ref{Table:varianceunknown_lambdachange:N=50} and \ref{Table:varianceunknown_lambdachange:N=100}
show that $\mathrm{MSPIC}_{\mathrm{BS}}$ has the lowest value 
of the minus log predictive distribution, except for the setting where $\lambda$ is 10.
The dominance of $\mathrm{MSPIC}_{\mathrm{BS}}$ is enlarged 
depending on the degree of the extrapolation,
i.e.,
the value of $\lambda$.

\section{Discussion and Conclusion}
In this paper,
we have considered the multistep ahead prediction under local misspecification.
We have shown that the Bayesian predictive distribution has smaller Kullback--Leibler risk 
in the setting
than the plug-in predictive distribution,
regardless of the prior choice.
From the results,
we have proposed the information criterion $\mathrm{MSPIC}$ for the multistep ahead prediction.
The proposed information criterion $\mathrm{MSPIC}$ is an asymptotically unbiased estimator 
of the Kullback--Leibler risk of the Bayesian predictive distribution.
By considering the variance of the information criterion $\mathrm{MSPIC}$,
we have proposed the bootstrap adjustment $\mathrm{MSPIC}_{\mathrm{BS}}$.
Numerical experiments show that our proposed information criterion is effective.

\appendix
\def\thesection{Appendix} 
\renewcommand{\thelem}{\Alph{section}\arabic{lem}}
\section{}

In this appendix,
we provide proofs of Theorems \ref{Risk_decomp} and \ref{Unbiased}.
The proofs consist of three parts:
the connection formula of the best approximating points (Lemma \ref{Connectionformula}),
the expansions of the maximum likelihood estimators (Lemma \ref{MLEs}),
and
the expansions of the Kullback--Leibler risk $R(\omega^{*},q_{m,\pi})$.

We need some additional notations for the proofs.
In the appendix,
we write $\theta$ instead of $\theta_{m}$
because we fix the submodel $\mathcal{M}_{m}$
and make expansions easier to see.
The simultaneous distribution of $(x^{(N)},y^{(M)})$ is denoted by $r(x^{(N)},y^{(M)}|\omega^{*})$.
In our setting,
distribution $r(x^{(N)},y^{(M)}|\omega^{*})$ is given as 
the product $p(x^{(N)}|\omega^{*})q(y^{(M)}|\omega^{*})$.
We use notations $g^{(r)}(\omega)$ and $g^{(r)}(\theta)$ 
for the Fisher information matrices of $r(x^{(N)},y^{(M)}|\omega)$ and $r(x^{(N)},y^{(M)}|\omega(\theta,0))$,
respectively.
Note that $g^{(r)}(\omega)=g^{(p)}(\omega)+g^{(q)}(\omega)$.
We denote $g^{(p)}_{a\alpha}\frac{\partial\xi^{\alpha}}{\partial\omega^{s}}$ by $g^{(p)}_{as}$
and use $g^{(r)}_{as}$ and $g^{(q)}_{as}$ in the same manner.

We denote the maximum likelihood estimator of $r(x^{(N)},y^{(M)}|\omega)$ by $\hat{\omega}(x^{(N)},y^{(M)})$
and the restricted maximum likelihood estimator 
of $r(x^{(N)},y^{(M)}|\omega(\theta,0))$ by $\hat{\theta}(x^{(N)},y^{(M)})$.
We denote embeddings of $\hat{\theta}(x^{(N)})$ and $\hat{\theta}(x^{(N)},y^{(M)})$
into parameter $\omega$
by $\hat{\omega}_{m}(x^{(N)})$ and $\hat{\omega}_{m}(x^{(N)},y^{(M)})$,
respectively.
We denote 
the best approximating point of $\omega^{*}$ with respect to $r(x^{(N)},y^{(M)}|\omega(\theta,0))$
by $\theta^{(r)}$.
In other words,
$\theta^{(r)}$ is defined by
\begin{eqnarray}
	\theta^{(r)}=\mathop{\mathrm{argmax}}_{\theta\in\Theta_{m}}\mathrm{E}_{\omega^{*}}[\log r(x^{(N)},y^{(M)}|\omega(\theta,0))].
\end{eqnarray}
In the appendix,
we write $\omega^{(p)}$ and $\omega^{(r)}$ instead of $\omega(\theta^{(p)},0)$ and $\omega(\theta^{(r)},0)$,
respectively.
We write $\xi^{(p)}$ instead of $\xi(\theta^{(p)},0)$.

We denote 
the $(a,b)$-components of
the observed Fisher information matrices of
$p(x^{(N)}|\omega(\theta,0))$ and $r(x^{(N)},y^{(M)}|\omega(\theta,0))$
by $\hat{G}^{(p)}_{ab}(\hat{\theta}(x^{(N)}))$ and $\hat{G}^{(r)}_{ab}(\hat{\theta}(x^{(N)},y^{(M)}))$,
respectively.
We denote the stochastic large and small orders with respect to the distribution with the parameter $\omega$
by $\mathrm{O}_{\omega}$ and $\mathrm{o}_{\omega}$,
respectively.

\begin{lem}
\label{Connectionformula}
Under local misspecification,
the following two equations hold:
for $a\in\{1,\ldots,d_{m}\}$
\begin{eqnarray}
h^{a}=-g^{(p)ab}_{m}(\theta^{(p)})g^{(p)}_{b\kappa}(\xi^{(p)})h^{\kappa}+\mathrm{O}(1/\sqrt{N})
\label{h_decomp}
\end{eqnarray}
and
\begin{eqnarray}
	\theta^{(r)a}_{m}-\theta^{(p)a}_{m}=g^{(r)ab}_{m}(\theta^{(p)})g^{(r)}_{bs}(\omega^{(p)})\frac{h^{s}}{\sqrt{N}}+\mathrm{O}(1/N).
\label{bestapproximatingpoints}
\end{eqnarray}
\end{lem}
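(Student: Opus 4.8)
The plan is to exploit the variational characterizations of the best approximating points $\theta^{(p)}$ and $\theta^{(r)}$ as stationary points of expected log-likelihoods, together with the fact that the full score of each of $p$, $q$, $r$ has vanishing expectation at the true point $\xi^{*}$. Throughout, the key bookkeeping fact is that the Fisher matrices $g^{(p)}(\xi^{*})$, $g^{(q)}(\xi^{*})$, $g^{(r)}(\xi^{*})$ and the third derivatives of the expected log-likelihoods are of order $N$, so their inverses are of order $1/N$, and the displacements $\xi^{*}-\xi^{(p)}$, $\theta^{(r)}-\theta^{(p)}$ are of order $1/\sqrt{N}$.

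For (\ref{h_decomp}): set $F^{(p)}_{a}(\xi):=\mathrm{E}_{\omega^{*}}[\partial_{\xi^{a}}\log p(x^{(N)}|\xi)]=-\partial_{\xi^{a}}D_{\mathrm{KL}}(p(\cdot|\xi^{*})\|p(\cdot|\xi))$, the derivative being taken only in the $\theta_{m}$-block. By definition of $\theta^{(p)}$ we have $F^{(p)}_{a}(\xi^{(p)})=0$, and by unbiasedness of the score at the truth also $F^{(p)}_{a}(\xi^{*})=0$ (the whole score vanishes in expectation, hence so does its $\theta_{m}$-block). Taylor-expanding $F^{(p)}_{a}$ about $\xi^{*}$, using the information equality $\partial_{\xi^{\beta}}F^{(p)}_{a}(\xi^{*})=-g^{(p)}_{a\beta}(\xi^{*})$ and the local misspecification relation $\xi^{*\beta}-\xi^{(p)\beta}=h^{\beta}/\sqrt{N}$, gives $0=g^{(p)}_{a\beta}(\xi^{*})h^{\beta}/\sqrt{N}+\mathrm{O}(1)$; the quadratic remainder is genuinely $\mathrm{O}(1)$ since a third derivative of the full-sample log-likelihood is of order $N$ and the squared displacement is of order $1/N$. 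Multiplying by $\sqrt{N}$, splitting $\beta$ into the $\theta_{m}$-block index $b$ and the $\gamma_{m}$-block index $\kappa$, and inverting the $\theta_{m}$-block of $g^{(p)}(\xi^{*})$ (order $N$, inverse order $1/N$) yields (\ref{h_decomp}); replacing $\xi^{*}$ by $\xi^{(p)}$ inside the Fisher matrices costs only $\mathrm{O}(1/\sqrt{N})$.

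For (\ref{bestapproximatingpoints}): proceed analogously with $F^{(r)}_{a}(\xi):=\mathrm{E}_{\omega^{*}}[\partial_{\xi^{a}}\log r(x^{(N)},y^{(M)}|\xi)]$ and $F^{(q)}_{a}$ defined in the same way. From $F^{(r)}_{a}(\xi(\theta^{(r)},0))=0$, $\log r=\log p+\log q$, and $F^{(p)}_{a}(\xi^{(p)})=0$ we get $F^{(r)}_{a}(\xi^{(p)})=F^{(q)}_{a}(\xi^{(p)})$, and expanding $F^{(q)}_{a}$ about $\xi^{*}$ as above gives $F^{(r)}_{a}(\xi^{(p)})=g^{(q)}_{a\beta}(\xi^{*})h^{\beta}/\sqrt{N}+\mathrm{O}(1)$. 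Expanding $F^{(r)}_{a}$ about $\xi^{(p)}$ in the $\theta_{m}$-block, with $\partial_{\xi^{b}}F^{(r)}_{a}(\xi^{(p)})=-g^{(r)}_{ab}(\xi^{(p)})+\mathrm{O}(\sqrt{N})$, then gives $g^{(r)}_{ab}(\xi^{(p)})(\theta^{(r)b}-\theta^{(p)b})=g^{(q)}_{a\beta}(\xi^{(p)})h^{\beta}/\sqrt{N}+\mathrm{O}(1)$; a short fixed-point argument (or smoothness of the projection map) confirms the a priori rate $\theta^{(r)}-\theta^{(p)}=\mathrm{O}(1/\sqrt{N})$ needed to control the quadratic remainder. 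Inverting the $\theta_{m}$-block of $g^{(r)}(\xi^{(p)})$, using the bound $g^{(p)}_{a\beta}(\xi^{*})h^{\beta}=\mathrm{O}(\sqrt{N})$ established above to replace $g^{(q)}_{a\beta}h^{\beta}$ by $g^{(r)}_{a\beta}h^{\beta}$ at cost $\mathrm{O}(1/N)$, and rewriting $g^{(r)}_{a\beta}(\xi^{(p)})h^{\beta}$ as $g^{(r)}_{as}(\omega^{(p)})h^{s}$ via the chain rule $g^{(r)}_{as}=g^{(r)}_{a\alpha}\partial\xi^{\alpha}/\partial\omega^{s}$ and $h^{s}=(\partial\omega^{s}/\partial\xi^{\alpha})h^{\alpha}$, yields (\ref{bestapproximatingpoints}).

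The main obstacle is the order bookkeeping: because the Fisher matrices are full-sample quantities of order $N$ and the higher derivatives of the expected log-likelihood are of order $N$, the Taylor remainders are not automatically small, so at each step one must verify that, after multiplication by the inverse Fisher matrices of order $1/N$, the errors collapse exactly to the claimed $\mathrm{O}(1/\sqrt{N})$ and $\mathrm{O}(1/N)$; a secondary point is justifying the a priori rate $\theta^{(r)}-\theta^{(p)}=\mathrm{O}(1/\sqrt{N})$ before the linearized identity can be solved.
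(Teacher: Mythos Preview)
Your argument is correct and is essentially the same as the paper's, just packaged differently. Both proofs linearize the expected restricted score around the relevant point and read off the displacement from the resulting linear system; you do it by Taylor-expanding the deterministic function $F^{(p)}_{a}(\xi)=\mathrm{E}_{\omega^{*}}[\partial_{a}\log p(x^{(N)}|\xi)]$ in its argument $\xi$ about $\xi^{*}$, while the paper computes the same quantity by a likelihood-ratio (change-of-measure) expansion of the density $p(\cdot|\omega^{*})$ about $p(\cdot|\omega^{(p)})$ and then takes $\mathrm{E}_{\omega^{(p)}}$. Via the information identity $\partial_{\beta}F^{(p)}_{a}(\xi^{*})=-g^{(p)}_{a\beta}(\xi^{*})$ these two computations produce the same linearized equation $g^{(p)}_{a\beta}h^{\beta}=\mathrm{O}(\sqrt{N})$, and the inversion step is identical. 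For (\ref{bestapproximatingpoints}) the only organizational difference is that you split $F^{(r)}=F^{(p)}+F^{(q)}$, use $F^{(p)}_{a}(\xi^{(p)})=0$ to reduce to $F^{(q)}$, and then convert $g^{(q)}_{a\beta}h^{\beta}$ to $g^{(r)}_{a\beta}h^{\beta}$ using the first part, whereas the paper works directly with $r$ and the measure change; both routes land on the same identity with the same $\mathrm{O}(1/N)$ error. Your explicit flag that the a priori rate $\theta^{(r)}-\theta^{(p)}=\mathrm{O}(1/\sqrt{N})$ must be secured before controlling the quadratic remainder is a point the paper leaves implicit.
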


\begin{proof}
First, we show that the former equation holds.
From (\ref{Localmisspec_omega}),
we obtain for $i\in\{1,\ldots,N\}$,
\begin{eqnarray}
p(x_{i}|\omega^{*})=p(x_{i}|\omega^{(p)})
\left[1+\partial_{s}\log p(x_{i}|\omega^{(p)})
\frac{h^{s}}{\sqrt{N}}+\mathrm{O}_{\omega^{(p)}}(1/N)\right],
\label{p_exp}
\end{eqnarray}
and
for $j\in\{1,\ldots,M\}$,
\begin{eqnarray}
q(y_{j}|\omega^{*})=q(y_{j}|\omega^{(p)})
\left[1+\partial_{s}\log q(y_{j}|\omega^{(p)})
\frac{h^{s}}{\sqrt{N}}+\mathrm{O}_{\omega^{(p)}}(1/N)\right],
\label{q_exp}
\end{eqnarray}
respectively.

Consider the definition of $\omega^{(p)}$:
\begin{eqnarray}
\frac{1}{\sqrt{N}}\mathrm{E}_{\omega^{*}}
\left[\partial_{a}\log p(x^{(N)}|\omega^{(p)})\right]
&=&0.
\label{def_p_bestapp}
\end{eqnarray}
From the independence of $x^{(N)}$ and from (\ref{p_exp}),
the LHS in (\ref{def_p_bestapp}) is expanded as
\begin{eqnarray}
&{}&
\hspace{-7mm}
\frac{1}{\sqrt{N}}\mathrm{E}_{\omega^{*}}
\left[\partial_{a}\log p(x^{(N)}|\omega^{(p)})\right]
\nonumber\\
&=&\frac{1}{\sqrt{N}}\mathop{\Sigma}_{i=1}^{N}
\mathrm{E}_{\omega^{*}}\left[\partial_{a}\log p(x_{i}|\omega^{(p)})\right]
\nonumber\\
&=&\frac{1}{\sqrt{N}}\mathop{\Sigma}_{i=1}^{N}
\mathrm{E}_{\omega^{(p)}}
\left[\left\{1+\partial_{s}\log p(x_{i}|\omega^{(p)})\frac{h^{s}}{\sqrt{N}}
+\mathrm{O}_{\omega^{(p)}}(1/N)\right\}
\{\partial_{a}\log p(x_{i}|\omega^{(p)})\}
\right]
\nonumber\\
&=&\frac{1}{\sqrt{N}}\mathop{\Sigma}_{i=1}^{N}
\mathrm{E}_{\omega^{(p)}}
\left[\partial_{a}\log p(x_{i}|\omega^{(p)})\right]
\nonumber\\
&&
+
\mathop{\Sigma}_{i=1}^{N}
\mathrm{E}_{\omega^{(p)}}
\left[\partial_{s}\log p(x_{i}|\omega^{(p)})
\partial_{a}\log p(x_{i}|\omega^{(p)})
\right]\frac{h^{s}}{N}
+\mathrm{O}(1/\sqrt{N})
\nonumber\\
&=&\frac{1}{N}g^{(p)}_{as}(\omega^{(p)})h^{s}+\mathrm{O}(1/\sqrt{N}).
\label{expansion_p_bestapp}
\end{eqnarray}
By comparing (\ref{def_p_bestapp}) with (\ref{expansion_p_bestapp}) up to constant order,
we obtain
\begin{eqnarray*}
\frac{1}{N}g^{(p)}_{as}(\omega^{(p)})h^{s}=\mathrm{O}(1/\sqrt{N}).
\end{eqnarray*}
By the reparameterization of $\omega$ to $\xi$,
we obtain 
\begin{eqnarray}
\frac{1}{N}g^{(p)}_{a\alpha}(\xi^{(p)})h^{\alpha}=\mathrm{O}(1/\sqrt{N}).
\end{eqnarray}
Thus we obtain (\ref{h_decomp}).

Next, we show the latter equation holds.
Consider the definition of $\omega^{(r)}$:
\begin{eqnarray}
\frac{1}{\sqrt{N}}\mathrm{E}_{\omega^{*}}
\left[\partial_{a}\log r(x^{(N)},y^{(M)}|\omega^{(r)})\right]
&=&0.
\label{def_r_bestapp}
\end{eqnarray}
From the independence of $x^{(N)}$ and $y^{(M)}$,
from (\ref{p_exp}) and (\ref{q_exp}),
and from the Taylor expansions 
of $\partial_{a}\log p(x_{i}|\omega^{(r)})$
and $\partial_{a}\log q(y_{j}|\omega^{(r)})$
around $\omega^{(p)}$,
the LHS in (\ref{def_r_bestapp}) is expanded as
\begin{eqnarray}
&{}&\hspace{-7mm}\frac{1}{\sqrt{N}}\mathrm{E}_{\omega^{*}}
\left[\partial_{a}\log r(x^{(N)},y^{(M)}|\omega^{(r)})\right]
\nonumber\\
&=&\frac{1}{\sqrt{N}}
\mathop{\Sigma}_{i=1}^{N}
\mathrm{E}_{\omega^{*}}\left[
\partial_{a}\log p(x_{i}|\omega^{(r)})\right]
+
\frac{1}{\sqrt{N}}
\mathop{\Sigma}_{j=1}^{M}
\mathrm{E}_{\omega^{*}}\left[
\partial_{a}\log q(y_{j}|\omega^{(r)})\right]
\nonumber\\
&=&
\frac{1}{\sqrt{N}}
\Sigma_{i=1}^{N}
\mathrm{E}_{\omega^{(p)}}\left[
\left\{
1+\partial_{s}\log p(x_{i}|\omega^{(p)})\frac{h^{s}}{\sqrt{N}}
+\mathrm{O}_{\omega^{(p)}}(1/N)
\right\}
\partial_{a}\log p(x_{i}|\omega^{(r)})
\right]
\nonumber\\
&&+
\frac{1}{\sqrt{N}}
\Sigma_{j=1}^{M}
\mathrm{E}_{\omega^{(p)}}\left[
\left\{
1+\partial_{s}\log q(y_{j}|\omega^{(p)})\frac{h^{s}}{\sqrt{N}}
+\mathrm{O}_{\omega^{(p)}}(1/N)
\right\}
\partial_{a}\log q(y_{j}|\omega^{(r)})
\right]
\nonumber\\
&=&
\frac{1}{\sqrt{N}}
\mathop{\Sigma}_{i=1}^{N}
\mathrm{E}_{\omega^{(p)}}
\left[
\left\{
1+\partial_{s}\log p(x_{i}|\omega^{(p)})\frac{h^{s}}{\sqrt{N}}
\right\}
\right.\nonumber\\
&&\left.\quad\quad\quad
\times\bigl\{
\partial_{a}\log p(x_{i}|\omega^{(p)})
+\partial_{ab}\log p(x_{i}|\omega^{(p)})
(\theta^{(r)b}-\theta^{(p)b})
+\mathrm{O}_{\omega^{(p)}}(\vert\vert \theta^{(r)}-\theta^{(p)}\vert\vert^{2})
\bigr\}
\right]
\nonumber\\
&&+
\frac{1}{\sqrt{N}}
\mathop{\Sigma}_{j=1}^{M}
\mathrm{E}_{\omega^{(p)}}
\left[
\left\{
1+\partial_{s}\log q(y_{j}|\omega^{(p)})\frac{h^{s}}{\sqrt{N}}
\right\}
\right.\nonumber\\
&&\left.\quad\quad\quad
\times\bigl\{
\partial_{a}\log q(y_{j}|\omega^{(p)})
+\partial_{ab}\log q(y_{j}|\omega^{(p)})
(\theta^{(r)b}-\theta^{(p)b})
+\mathrm{O}_{\omega^{(p)}}(\vert\vert \theta^{(r)}-\theta^{(p)}\vert\vert^{2})
\bigr\}
\right]
\nonumber\\
&=&
\frac{1}{\sqrt{N}}
\mathrm{E}_{\omega^{(p)}}
\left[
\partial_{a}\log r(x^{(N)},y^{(M)}|\omega^{(p)})
\right]
\nonumber\\
&&+g^{(r)}_{sa}(\omega^{(p)})\frac{h^{s}}{N}
-\frac{1}{\sqrt{N}}g^{(r)}_{ab}(\theta^{(p)})(\theta^{(r)b}-\theta^{(p)b})
+\mathrm{O}(\sqrt{N}\vert\vert\theta^{(r)}-\theta^{(p)}\vert\vert^{2}).
\label{expansion_r_bestapp}
\end{eqnarray}
Thus,
we obtain (\ref{bestapproximatingpoints})
by comparing (\ref{def_r_bestapp}) with (\ref{expansion_r_bestapp}) up to constant order.
\end{proof}

\begin{lem}
\label{MLEs}
Under local misspecification,
the following equations hold:
for $a\in\{1,\ldots,d_{m}\}$,
\begin{eqnarray}
\hat{\theta}^{a}(x^{(N)},y^{(M)})-\theta^{(r)a}
&=&g^{(r)ab}_{m}(\theta^{(r)})
\partial_{b}\log r(x^{(N)},y^{(M)}|\omega^{(r)})
+\mathrm{O}_{\omega^{*}}(1/N)
\label{locallinear_sub_r}
\end{eqnarray}
and
\begin{eqnarray}
\hat{\theta}^{a}(x^{(N)})-\theta^{(p)a}
&=&g^{(p)ab}_{m}(\theta^{(p)})
\partial_{b}\log p(x^{(N)}|\omega^{(p)})
+\mathrm{O}_{\omega^{*}}(1/N),
\label{locallinear_sub_p}
\end{eqnarray}
respectively.

For $s\in\{1,\ldots,d_{\mathrm{full}}\}$,
\begin{eqnarray}
\hat{\omega}^{s}(x^{(N)},y^{(M)})-\omega^{*s}&=&g^{(r)st}(\omega^{*})
\partial_{t}\log r(x^{(N)},y^{(M)}|\omega^{*})+\mathrm{O}_{\omega^{*}}(1/N)
\label{locallinear_full_r}
\end{eqnarray}
and
\begin{eqnarray}
\hat{\omega}^{s}(x^{(N)})-\omega^{*s}&=&g^{(p)st}(\omega^{*})
\partial_{t}\log p(x^{(N)}|\omega^{*})+\mathrm{O}_{\omega^{*}}(1/N),
\label{locallinear_full_p}
\end{eqnarray}
\end{lem}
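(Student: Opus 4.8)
The plan is to obtain each of the four expansions by linearising, around the appropriate reference point, the score equation that defines the corresponding estimator. For the full-model estimators $\hat{\omega}(x^{(N)},y^{(M)})$ and $\hat{\omega}(x^{(N)})$ the reference point is the true parameter $\omega^{*}$, at which $r(x^{(N)},y^{(M)}|\omega)$ and $p(x^{(N)}|\omega)$ are correctly specified, so that $\mathrm{E}_{\omega^{*}}[\partial_{t}\log r(x^{(N)},y^{(M)}|\omega^{*})]=0$ and $\mathrm{E}_{\omega^{*}}[\partial_{t}\log p(x^{(N)}|\omega^{*})]=0$. For the submodel estimators $\hat{\theta}(x^{(N)},y^{(M)})$ and $\hat{\theta}(x^{(N)})$ the reference points are the best-approximating points $\theta^{(r)}$ and $\theta^{(p)}$, which by their definitions satisfy the population score identities $\mathrm{E}_{\omega^{*}}[\partial_{a}\log r(x^{(N)},y^{(M)}|\omega^{(r)})]=0$ and $\mathrm{E}_{\omega^{*}}[\partial_{a}\log p(x^{(N)}|\omega^{(p)})]=0$. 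Below I write $\ell$ generically for the relevant log-likelihood viewed as a function of the estimated parameter, $\bar{\theta}$ for the relevant reference point, and $g(\bar{\theta})$ for the corresponding full- or sub-model Fisher information matrix.

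The two ingredients I would prepare are: (i) $\partial_{a}\ell(\bar{\theta})$ has zero $\mathrm{E}_{\omega^{*}}$-mean and is a sum of $\mathrm{O}(N)$ independent terms, hence $\partial_{a}\ell(\bar{\theta})=\mathrm{O}_{\omega^{*}}(\sqrt{N})$; and (ii) the observed information satisfies $\partial_{ab}\ell(\bar{\theta})=-g_{ab}(\bar{\theta})+\mathrm{O}_{\omega^{*}}(\sqrt{N})$. For the full-model case (ii) is immediate from the Bartlett identities, which give $\mathrm{E}_{\omega^{*}}[\partial_{ab}\ell(\omega^{*})]=-g_{ab}(\omega^{*})$ exactly, together with a central-limit fluctuation of order $\mathrm{O}_{\omega^{*}}(\sqrt{N})$. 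For the submodel case one additionally uses local misspecification (\ref{Localmisspec_omega}) and (\ref{bestapproximatingpoints}) of Lemma \ref{Connectionformula}, which place $\omega^{*}$ within $\mathrm{O}(1/\sqrt{N})$ of the embedding of $\bar{\theta}$, so that replacing $\mathrm{E}_{\omega^{*}}$ by the model expectation at $\bar{\theta}$ changes the $\mathrm{O}(N)$-term sum $\mathrm{E}_{\omega^{*}}[\partial_{ab}\ell(\bar{\theta})]$ by only $\mathrm{O}(N)\cdot\mathrm{O}(1/\sqrt{N})=\mathrm{O}(\sqrt{N})$, while at the model expectation it equals $-g_{ab}(\bar{\theta})$ by the Bartlett identity.

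With these in hand I would first record the $\sqrt{N}$-consistency $\hat{\theta}-\bar{\theta}=\mathrm{O}_{\omega^{*}}(1/\sqrt{N})$ by a standard M-estimation argument, using nonsingularity of the Fisher information so that the population criterion is strictly concave near $\bar{\theta}$. Then I would expand the estimator's score at $\bar{\theta}$ by Taylor's theorem with Lagrange remainder,
\begin{eqnarray*}
0&=&\partial_{a}\ell(\bar{\theta})+\partial_{ab}\ell(\bar{\theta})(\hat{\theta}^{b}-\bar{\theta}^{b})+\frac{1}{2}\partial_{abc}\ell(\tilde{\theta})(\hat{\theta}^{b}-\bar{\theta}^{b})(\hat{\theta}^{c}-\bar{\theta}^{c}),
\end{eqnarray*}
with $\tilde{\theta}$ on the segment joining $\hat{\theta}$ and $\bar{\theta}$. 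Because the third derivative is $\mathrm{O}_{\omega^{*}}(N)$ uniformly on a fixed neighbourhood of $\bar{\theta}$, the quadratic remainder is $\mathrm{O}_{\omega^{*}}(N)\cdot\mathrm{O}_{\omega^{*}}(1/N)=\mathrm{O}_{\omega^{*}}(1)$; inserting (ii) and absorbing the fluctuation term $\mathrm{O}_{\omega^{*}}(\sqrt{N})\cdot(\hat{\theta}^{b}-\bar{\theta}^{b})=\mathrm{O}_{\omega^{*}}(1)$ gives $g_{ab}(\bar{\theta})(\hat{\theta}^{b}-\bar{\theta}^{b})=\partial_{a}\ell(\bar{\theta})+\mathrm{O}_{\omega^{*}}(1)$. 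Multiplying by the inverse Fisher information, which is $\mathrm{O}(1/N)$, converts the $\mathrm{O}_{\omega^{*}}(1)$ error into $\mathrm{O}_{\omega^{*}}(1/N)$ and yields (\ref{locallinear_sub_r}) and (\ref{locallinear_sub_p}) for the submodel estimators and, by the identical computation with $\bar{\theta}=\omega^{*}$, the full-model expansions (\ref{locallinear_full_r}) and (\ref{locallinear_full_p}).

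The main obstacle is the stochastic-order bookkeeping in the two misspecified submodel cases: one must keep $\mathrm{E}_{\omega^{*}}$ and the model expectations at the best-approximating points carefully distinct, and invoke Lemma \ref{Connectionformula} to bound their discrepancy on the Hessian by $\mathrm{O}(\sqrt{N})$. After the final multiplication by the $\mathrm{O}(1/N)$ inverse information, this discrepancy, the score--Hessian cross term, and the cubic Taylor remainder all land exactly at the claimed $\mathrm{O}_{\omega^{*}}(1/N)$ order with no slack, so the argument needs uniform control of the third derivatives of the log-likelihood on a neighbourhood of each reference point, which I would draw from the smoothness hypotheses implicit in the statement.
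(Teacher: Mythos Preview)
Your proposal is correct and follows essentially the same approach as the paper: both linearise the score equations $\partial_{a}\ell(\hat{\theta})=0$ around the appropriate reference point, use that the observed Hessian equals $-g_{ab}(\bar{\theta})+\mathrm{O}(\sqrt{N})$ (the paper records this as $\partial_{ab}\log r+g^{(r)}_{ab}=\mathrm{O}_{\omega^{(r)}}(\sqrt{N})$ together with $\omega^{*}-\omega^{(r)}=\mathrm{O}(1/\sqrt{N})$), and then invert. Your treatment is somewhat more explicit about the cubic remainder and the preliminary $\sqrt{N}$-consistency, and you handle all four expansions uniformly whereas the paper simply cites van der Vaart for (\ref{locallinear_full_r}) and (\ref{locallinear_full_p}), but the substance of the argument is the same.
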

respectively.

\begin{proof}
Consider the estimative equations:
\begin{eqnarray}
\partial_{a}\log r(x^{(N)},y^{(M)}|\hat{\omega}_{m}(x^{(N)},y^{(M)}))&=&0
\label{estimative_r_restricted}
\end{eqnarray}
and
\begin{eqnarray}
\partial_{a}\log p(x^{(N)}|\hat{\omega}_{m}(x^{(N)}))&=&0.
\label{estimative_p_restricted}
\end{eqnarray}
We apply the Taylor expansions around 
$\omega^{(r)}$ and $\omega^{(p)}$
to equations (\ref{estimative_r_restricted}) and (\ref{estimative_p_restricted}),
respectively.
Since $\partial_{ab}\log r(x^{(N)},y^{(M)}|\omega^{(r)})+g^{(r)}_{ab}(\theta^{(r)})=\mathrm{O}_{\omega^{(r)}}(\sqrt{N})$
and $\omega^{*}-\omega^{(r)}=\mathrm{O}(1/\sqrt{N})$,
we obtain the following expansion:
\begin{eqnarray*}
&{}&\hspace{-7mm}\partial_{a}\log r(x^{(N)},y^{(M)}|\hat{\omega}_{m}(x^{(N)},y^{(M)}))
\nonumber\\
&=&
\partial_{a}\log r(x^{(N)},y^{(M)}|\omega^{(r)})
+
\partial_{ab}\log r(x^{(N)},y^{(M)}|\omega^{(r)})
\left\{\hat{\theta}^{b}(x^{(N)},y^{(M)})-\theta^{(r)b}\right\}
\nonumber\\
&&+
\mathrm{O}_{\omega^{(r)}}(\sqrt{N}
\vert\vert 
\hat{\theta}(x^{(N)},y^{(M)})-\theta^{(r)}
\vert\vert)
+
\mathrm{O}_{\omega^{(r)}}(
N\vert\vert
\hat{\theta}(x^{(N)},y^{(M)})-\theta^{(r)}
\vert\vert^{2})
\nonumber\\
&=&
\partial_{a}\log r(x^{(N)},y^{(M)}|\omega^{(r)})
-
g^{(r)}_{ab}(\theta^{(r)})\left\{\hat{\theta}^{b}(x^{(N)},y^{(M)})-\theta^{(r)b}\right\}
+\mathrm{O}_{\omega^{*}}(1).
\end{eqnarray*}
Likewise,
we obtain the following expansion:
\begin{eqnarray*}
&{}&\hspace{-7mm}\partial_{a}\log p(x^{(N)}|\hat{\omega}_{m}(x^{(N)}))
\nonumber\\
&=&
\partial_{a}\log p(x^{(N)}|\omega^{(p)})
+
\partial_{ab}\log p(x^{(N)}|\omega^{(p)})
\left\{\hat{\theta}^{b}(x^{(N)})-\theta^{(p)b}\right\}
\nonumber\\
&&+
\mathrm{O}_{\omega^{(p)}}(\sqrt{N}
\vert\vert 
\hat{\theta}(x^{(N)})-\theta^{(p)}
\vert\vert)
+
\mathrm{O}_{\omega^{(p)}}(N\vert\vert
\hat{\theta}(x^{(N)})-\theta^{(p)}
\vert\vert^{2})
\nonumber\\
&=&
\partial_{a}\log p(x^{(N)}|\omega^{(p)})
-g^{(p)}_{ab}(\theta^{(p)})
\left\{\hat{\theta}^{b}(x^{(N)})-\theta^{(p)b}\right\}
+
\mathrm{O}_{\omega^{*}}(1).
\end{eqnarray*}

Thus, we obtain (\ref{locallinear_sub_r}) and (\ref{locallinear_sub_p}).
Equations (\ref{locallinear_full_r}) and (\ref{locallinear_full_p})
immediately follow from the estimative equations of $\hat{\omega}$.
For example,
see Theorem 5.39 in \citet{vanderVaart}.
\end{proof}

\begin{proof}[Proof of Theorem \ref{Risk_decomp}]
We prove Theorem \ref{Risk_decomp} by using the above lemmas.
Consider the following decomposition of the Kullback--Leibler risk:
\begin{eqnarray}
R(\omega^{*},q_{m,\pi})&=&
\mathrm{E}_{\omega^{*}}
\left[\log\frac{r(x^{(N)},y^{(M)}|\omega^{*})}{r_{m,\pi}(x^{(N)},y^{(M)})}\right]
-\mathrm{E}_{\omega^{*}}
\left[\log\frac{p(x^{(N)}|\omega^{*})}{p_{m,\pi}(x^{(N)})}\right].
\label{KL_decomp}
\end{eqnarray}
The marginal distributions $r_{m,\pi}(x^{(N)},y^{(M)})$ and $p_{m,\pi}(x^{(N)})$
are expanded as
\begin{eqnarray}
r_{m,\pi}(x^{(N)},y^{(M)})
&=&
(2\pi)^{d_{m}/2}
\frac{\pi(\hat{\theta}(x^{(N)},y^{(M)}))}{\vert \hat{G}^{(r)}(\hat{\theta}(x^{(N)},y^{(M)}))\vert^{1/2}}
\nonumber\\
&&\quad\quad
\times r(x^{(N)},y^{(M)}|\hat{\omega}_{m}(x^{(N)},y^{(M)}))
\{1+\mathrm{o}(1)\}
\label{marginal_r}
\end{eqnarray}
and
\begin{eqnarray}
p_{m,\pi}(x^{(N)})
&=&
(2\pi)^{d_{m}/2}
\frac{\pi(\hat{\theta}(x^{(N)}))}{\vert \hat{G}^{(p)}(\hat{\theta}(x^{(N)}))\vert^{1/2}}
p(x^{(N)}|\hat{\omega}_{m}(x^{(N)}))
\{1+\mathrm{o}(1)\},
\label{marginal_p}
\end{eqnarray}
respectively.
See p. 117 in \citet{Ghosh_Introduction_Bayes}.

By using the marginal expansions (\ref{marginal_r}) and (\ref{marginal_p}),
the above decomposition is expanded as
\begin{eqnarray}
&{}&\hspace{-7mm}R(\omega^{*},q_{m,\pi})
\nonumber\\
&=&\mathrm{E}_{\omega^{*}}
\left[
\log\frac{r(x^{(N)},y^{(M)}|\omega^{*})}{r(x^{(N)},y^{(M)}|\hat{\omega}_{m}(x^{(N)},y^{(M)}))}
\right]
-\mathrm{E}_{\omega^{*}}
\left[\log\frac{p(x^{(N)}|\omega^{*})}{p(x^{(N)}|\hat{\omega}_{m}(x^{(N)}))}
\right]
\nonumber\\
&&+\mathrm{E}_{\omega^{*}}\left[
\frac{1}{2}\log
\frac{\vert \hat{G}^{(r)}(\hat{\theta}(x^{(N)},y^{(M)})) \vert}
{\vert \hat{G}^{(p)}(\hat{\theta}(x^{(N)}))\vert}
\right]
-\mathrm{E}_{\omega^{*}}\left[
\log\frac{\pi(\hat{\theta}(x^{(N)},y^{(M)}))}{\pi(\hat{\theta}(x^{(N)}))}
\right]
+\mathrm{o}(1).
\label{KL_marginal_decomp}
\end{eqnarray}
From (\ref{Localmisspec_omega}), (\ref{bestapproximatingpoints}), and (\ref{locallinear_sub_r}),
the following equation holds:
\begin{eqnarray}
&{}&\hspace{-7mm}\hat{\omega}^{s}_{m}(x^{(N)},y^{(M)})-\omega^{*s}
\nonumber\\
&=&\omega^{s}_{m}(x^{(N)},y^{(M)})-\omega^{(r)s}
+\omega^{(r)s}-\omega^{(p)s}
+\omega^{(p)s}-\omega^{*s}
\nonumber\\
&=&
\frac{\partial\omega^{s}}{\partial\theta^{a}}(\theta^{(r)})
g^{(r)ab}_{m}(\theta^{(r)})
\partial_{b}\log r(x^{(N)},y^{(M)}|\omega^{(r)})
\nonumber\\
&&+\frac{\partial\omega^{s}}{\partial\theta^{a}}(\theta^{(p)})
g^{(r)ab}_{m}(\theta^{(p)})g^{(r)}_{bs}(\omega^{(p)})\frac{h^{s}}{\sqrt{N}}
-\frac{h^{s}}{\sqrt{N}}
+\mathrm{O}_{\omega^{*}}(1/N).
\label{MLE_linear_imb_r}
\end{eqnarray}

First,
consider the first term in (\ref{KL_marginal_decomp}).
By using the Taylor expansion,
we expand the negative of the first term as 
\begin{eqnarray}
&{}&\hspace{-7mm}\mathrm{E}_{\omega^{*}}\left[\log
\frac{r(x^{(N)},y^{(M)}|\hat{\omega}_{m}(x^{(N)},y^{(M)}))}{r(x^{(N)},y^{(M)}|\omega^{*})}
\right]
\nonumber\\
&=&\mathrm{E}_{\omega^{*}}
\left[
\partial_{s}\log r(x^{(N)},y^{(M)}|\omega^{*})
\{\hat{\omega}^{s}_{m}(x^{(N)},y^{(M)})-\omega^{*s}\}
\right]
\nonumber\\
&&+\frac{1}{2}\mathrm{E}_{\omega^{*}}
\left[
\partial_{st}\log r(x^{(N)},y^{(M)}|\omega^{*})
\{\hat{\omega}^{s}_{m}(x^{(N)},y^{(M)})-\omega^{*s}\}
\{\hat{\omega}^{t}_{m}(x^{(N)},y^{(M)})-\omega^{*t}\}
\right]
\nonumber\\
&&+\mathrm{o}(1).
\label{Firstterm_r_decomp}
\end{eqnarray}
From 
the Taylor expansion 
of $\partial_{b}\log r(x^{(N)},y^{(M)}|\omega^{(r)})$
around $\omega^{*}$,
we obtain the following equation
for the first term in (\ref{Firstterm_r_decomp}):
\begin{eqnarray}
&{}&\hspace{-7mm}
\mathrm{E}_{\omega^{*}}[\partial_{s}\log r(x^{(N)},y^{(M)}|\omega^{*})\{\hat{\omega}^{s}_{m}(x^{(N)},y^{(M)})-\omega^{*s}\}]
\nonumber\\
&=&\mathrm{E}_{\omega^{*}}\left[
\partial_{s}\log r(x^{(N)},y^{(M)}|\omega^{*})
\left\{
\frac{\partial\omega^{s}}{\partial\theta^{a}}(\theta^{(r)})
g^{(r)ab}_{m}(\theta^{(r)})
\partial_{b}\log r(x^{(N)},y^{(M)}|\omega^{(r)})
\right.
\right.
\nonumber\\
&&\left.\left.\quad\quad\quad+\frac{\partial\omega^{s}}{\partial\theta^{a}}(\theta^{(p)})
g^{(r)ab}_{m}(\theta^{(p)})g^{(r)}_{bs}(\omega^{(p)})\frac{h^{s}}{\sqrt{N}}
-\frac{h^{s}}{\sqrt{N}}
\right\}
\right]
+\mathrm{o}(1)
\nonumber\\
&=&d_{m}+\mathrm{o}(1).
\label{r_exp_1}
\end{eqnarray}

From (\ref{MLE_linear_imb_r}),
we expand the second term in (\ref{Firstterm_r_decomp}) as
\begin{eqnarray}
&{}&\hspace{-7mm}\frac{1}{2}\mathrm{E}_{\omega^{*}}
[
\partial_{st}\log r(x^{(N)},y^{(M)}|\omega^{*})
\{\hat{\omega}^{s}_{m}(x^{(N)},y^{(M)})-\omega^{*s}\}
\{\hat{\omega}^{t}_{m}(x^{(N)},y^{(M)})-\omega^{*t}\}
]
\nonumber\\
&=&-\frac{1}{2}g^{(r)}_{st}(\omega^{*})
\mathrm{E}_{\omega^{*}}
[
\{\hat{\omega}^{s}_{m}(x^{(N)},y^{(M)})-\omega^{*s}\}
\{\hat{\omega}^{t}_{m}(x^{(N)},y^{(M)})-\omega^{*t}\}
]
+\mathrm{o}(1)
\nonumber\\
&=&-\frac{1}{2}g^{(r)}_{st}(\omega^{*})
\mathrm{E}_{\omega^{*}}
\left[
\left\{
\frac{\partial\omega^{s}}{\partial\theta^{a}}(\theta^{(r)})
g^{(r)ab}_{m}(\theta^{(r)})
\partial_{b}\log r(x^{(N)},y^{(M)}|\omega^{(r)})
\right.\right.
\nonumber\\
&&\left.\left.\quad\quad\quad\quad\quad\quad\quad
+\frac{\partial\omega^{s}}{\partial\theta^{a}}(\theta^{(p)})
g^{(r)ab}_{m}(\theta^{(p)})g^{(r)}_{bs}(\omega^{(p)})\frac{h^{s}}{\sqrt{N}}
-\frac{h^{s}}{\sqrt{N}}
\right\}
\right.
\nonumber\\
&&\left.\quad\quad\quad\quad\quad
\quad\times\left\{
\frac{\partial\omega^{t}}{\partial\theta^{c}}(\theta^{(r)})
g^{(r)cd}_{m}(\theta^{(r)})
\partial_{d}\log r(x^{(N)},y^{(M)}|\omega^{(r)})
\right.\right.
\nonumber\\
&&\left.\left.\quad\quad\quad\quad\quad\quad\quad
+\frac{\partial\omega^{t}}{\partial\theta^{c}}(\theta^{(p)})
g^{(r)cd}_{m}(\theta^{(p)})g^{(r)}_{dt}(\omega^{(p)})\frac{h^{t}}{\sqrt{N}}
-\frac{h^{t}}{\sqrt{N}}
\right\}
\right]
+\mathrm{o}(1)
\nonumber\\
&=&
-\frac{1}{2}g^{(r)}_{ac}(\omega^{(r)})
g^{(r)ab}_{m}(\theta^{(r)})
g^{(r)cd}_{m}(\theta^{(r)})
\mathrm{E}_{\omega^{*}}
\left[
\partial_{b}\log r(x^{(N)},y^{(M)}|\omega^{(r)})
\partial_{d}\log r(x^{(N)},y^{(M)}|\omega^{(r)})
\right]
\nonumber\\
&&-\frac{1}{2}g^{(r)}_{st}(\omega^{*})\frac{h^{s}h^{t}}{N}
\nonumber\\
&&-\frac{1}{2}g^{(r)}_{st}(\omega^{*})
\frac{\partial\omega^{s}}{\partial\theta^{a}}(\theta^{(p)})
\frac{\partial\omega^{t}}{\partial\theta^{c}}(\theta^{(p)})
g^{(r)ab}_{m}(\theta^{(p)})g^{(r)}_{bu}(\omega^{(p)})
\frac{h^{u}}{\sqrt{N}}
g^{(r)cd}_{m}(\theta^{(p)})g^{(r)}_{dv}(\omega^{(p)})
\frac{h^{v}}{\sqrt{N}}
\nonumber\\
&&+g^{(r)}_{st}(\omega^{*})\frac{h^{s}}{\sqrt{N}}
\frac{\partial\omega^{t}}{\partial\theta^{c}}(\theta^{(p)})
g^{(r)cd}_{m}(\theta^{(p)})
g^{(r)}_{dv}(\omega^{(p)})
\frac{h^{v}}{\sqrt{N}}
+\mathrm{o}(1).
\label{Quadraform_r}
\end{eqnarray}
From the independence of $x^{(N)}$ and $y^{(M)}$ and from the Taylor expansions of $\partial_{a}\log p(x_{i}|\omega^{(r)})$
and $\partial_{a}\log q(y_{j}|\omega^{(r)})$
around $\omega^{*}$,
\begin{eqnarray}
&{}&\hspace{-7mm}\mathrm{E}_{\omega^{*}}\left[
\partial_{a}\log r(x^{(N)},y^{(M)}|\omega^{(r)})
\partial_{b}\log r(x^{(N)},y^{(M)}|\omega^{(r)})
\right]
\nonumber\\
&=&
\Sigma_{i=1}^{N}\mathrm{E}_{\omega^{*}}[
	\partial_{a}\log p(x_{i}|\omega^{(r)})\partial_{b}\log p(x_{i}|\omega^{(r)})
]
+
\Sigma_{j=1}^{M}\mathrm{E}_{\omega^{*}}[
	\partial_{a}\log q(y_{j}|\omega^{(r)})\partial_{b}\log q(y_{j}|\omega^{(r)})
]
\nonumber\\
&&+
\Sigma_{i\neq k}^{N}\mathrm{E}_{\omega^{*}}[
	\partial_{a}\log p(x_{i}|\omega^{(r)})\partial_{b}\log p(x_{k}|\omega^{(r)})
]
+
\Sigma_{j\neq l}^{M}\mathrm{E}_{\omega^{*}}[
	\partial_{a}\log q(y_{j}|\omega^{(r)})\partial_{b}\log q(y_{l}|\omega^{(r)})
]
\nonumber\\
&&+
2\Sigma_{i,j}^{i=N,j=M}\mathrm{E}_{\omega^{*}}[
	\partial_{a}\log p(x_{i}|\omega^{(r)})\partial_{b}\log q(y_{j}|\omega^{(r)})
]
\nonumber\\
&=&g^{(r)}_{ab}(\omega^{(r)})+\mathrm{O}(\sqrt{N}).
\label{product_score_r}
\end{eqnarray}
By substituting (\ref{product_score_r})
into the first term in (\ref{Quadraform_r}),
we obtain the following further expansion of (\ref{Quadraform_r}):
\begin{eqnarray}
&&\frac{1}{2}\mathrm{E}_{\omega^{*}}[\partial_{st}\log r(x^{(N)},y^{(M)}|\omega^{*})
\{\omega^{*s}-\hat{\omega}^{s}_{m}(x^{(N)},y^{(M)})\}
\{\omega^{*t}-\hat{\omega}^{t}_{m}(x^{(N)},y^{(M)})\}]
\nonumber\\
&=&-\frac{1}{2}g^{(r)}_{st}(\omega^{(p)})\frac{h^{s}h^{t}}{N}+\frac{1}{2}g^{(r)ab}_{m}(\theta^{(p)})g^{(r)}_{as}(\omega^{(p)})g^{(r)}_{bt}(\omega^{(p)})\frac{h^{s}h^{t}}{N}
-\frac{1}{2}d_{m}+\mathrm{o}(1).
\label{r_exp_2}
\end{eqnarray}
By combining (\ref{r_exp_1}) and (\ref{r_exp_2}),
we obtain the following equation for (\ref{Firstterm_r_decomp}):
\begin{eqnarray}
&{}&\hspace{-7mm}\mathrm{E}_{\omega^{*}}
\left[
\log\frac{r(x^{(N)},y^{(M)}|\hat{\omega}_{m}(x^{(N)},y^{(M)}))}{r(x^{(N)},y^{(M)}|\omega^{*})}
\right]
\nonumber\\
&=&-\frac{1}{2}\bigl[g^{(r)}_{st}(\omega^{(p)})
-g^{(r)ab}_{m}(\theta^{(p)})g^{(r)}_{as}(\omega^{(p)})g^{(r)}_{bt}(\omega^{(p)})\bigr]
\frac{h^{s}h^{t}}{N}
+\frac{1}{2}d_{m}
+\mathrm{o}(1).
\label{Firstterm_r_decomp_final}
\end{eqnarray}

Next,
consider the second term in (\ref{KL_marginal_decomp}).
The estimator $\hat{\omega}_{m}(x^{(N)})$ is expanded as
\begin{eqnarray}
\hat{\omega}^{s}_{m}(x^{(N)})-\omega^{*s}
&=&
\frac{\partial\omega^{s}}{\partial\theta^{a}}(\theta^{(p)})
g^{(p)ab}_{m}(\theta^{(p)})\partial_{b}\log p(x^{(N)}|\omega^{(p)})
-\frac{h^{s}}{\sqrt{N}}
+\mathrm{O}_{\omega^{*}}(1/N).
\label{MLE_linear_imb_p}
\end{eqnarray}
By using the Taylor expansion,
we expand the negative of the second term in (\ref{KL_marginal_decomp}) as
\begin{eqnarray}
	&&\mathrm{E}_{\omega^{*}}\left[\log\frac{p(x^{N)}|\hat{\omega}_{m}(x^{(N)}))}{p(x^{(N)}|\omega^{*})}\right]
\nonumber\\
&=&\mathrm{E}_{\omega^{*}}[\partial_{s}\log p(x^{(N)}|\omega^{*})\{\hat{\omega}^{s}_{m}(x^{(N)})-\omega^{*s}\}]
\nonumber\\
&&+\frac{1}{2}\mathrm{E}_{\omega^{*}}[\partial_{st}\log p(x^{(N)}|\omega^{*})
\{\hat{\omega}^{s}_{m}(x^{(N)})-\omega^{*s}\}
\{\hat{\omega}^{t}_{m}(x^{(N)})-\omega^{*t}\}]
+\mathrm{o}(1).
\label{Secondterm_p_decomp}
\end{eqnarray}

From (\ref{MLE_linear_imb_p}),
we obtain
\begin{eqnarray}
&{}&\hspace{-7mm}
\mathrm{E}_{\omega^{*}}
\left[
\partial_{s}\log p(x^{(N)}|\omega^{*})
\{\hat{\omega}^{s}_{m}(x^{(N)})-\omega^{*s}\}
\right]
\nonumber\\
&=&
\mathrm{E}_{\omega^{*}}
\left[\partial_{s}\log p(x^{(N)}|\omega^{*})
\left\{\frac{\partial\omega^{s}}{\partial\theta^{a}}(\theta^{(p)})
g^{(p)ab}_{m}(\theta^{(p)})\partial_{b}\log p(x^{(N)}|\omega^{(p)})
-\frac{h^{s}}{\sqrt{N}}+\mathrm{O}_{\omega^{*}}(1/N)
\right\}\right]
\nonumber\\
&=&g^{(p)}_{ab}(\omega^{(p)})g^{(p)ab}_{m}(\theta^{(p)})+\mathrm{o}(1)
\nonumber\\
&=&d_{m}+\mathrm{o}(1)
\label{p_exp_1}
\end{eqnarray}
and
\begin{eqnarray}
&{}&\hspace{-7mm}
\mathrm{E}_{\omega^{*}}[g^{(p)}_{st}(\omega^{*})
\{\hat{\omega}_{m}(x^{(N)})-\omega^{*s}\}\{\hat{\omega}_{m}(x^{(N)})-\omega^{*t}\}]
\nonumber\\
&=&g^{(p)}_{st}(\omega^{*})\frac{h^{s}h^{t}}{N}+g^{(p)}_{st}(\omega^{*})\frac{\partial\omega^{s}}{\partial\theta^{a}}(\theta^{(p)})
\frac{\partial\omega^{t}}{\partial\theta^{b}}(\theta^{(p)})g^{(p)ac}_{m}(\theta^{(p)})g^{(p)bd}_{m}(\theta^{(p)})g^{(p)}_{bd}(\omega^{(p)})
+\mathrm{o}(1)
\nonumber\\
&=&g^{(p)}_{st}(\omega^{*})\frac{h^{s}h^{t}}{N}+d_{m}+\mathrm{o}(1).
\label{p_exp_2}
\end{eqnarray}
From (\ref{p_exp_1}) and (\ref{p_exp_2}),
we obtain the following equation for (\ref{Secondterm_p_decomp}):
\begin{eqnarray}
\mathrm{E}_{\omega^{*}}
\left[
	\log\frac{p(x^{(N)}|\hat{\omega}_{m}(x^{(N)}))}{p(x^{(N)}|\omega^{*})}
\right]
&=&-\frac{1}{2}g^{(p)}_{st}(\omega^{*})\frac{h^{s}h^{t}}{N}+\frac{1}{2}d_{m}+\mathrm{o}(1).
\label{Secondterm_p_decomp_final}
\end{eqnarray}
The Taylor expansions around $\theta^{(p)}$ and equation (\ref{bestapproximatingpoints})
show that the third and fourth terms
in (\ref{KL_marginal_decomp}) are equal to $\mathrm{o}(1)$.
Thus,
from (\ref{Firstterm_r_decomp_final}) and (\ref{Secondterm_p_decomp_final}),
the Kullback--Leibler risk $R(\omega^{*},q_{m,\pi})$ is expanded as
\begin{eqnarray}
&{}&\hspace{-7mm}R(\omega^{*},q_{m,\pi})
\nonumber\\
&=&\frac{1}{2N}
\left[
g^{(r)}_{st}(\omega^{*})
-g^{(p)}_{st}(\omega^{*})-g^{(r)ab}_{m}(\theta^{(p)})g^{(r)}_{sa}(\omega^{(p)})g^{(r)}_{tb}(\omega^{(p)})
\right]
h^{s}h^{t}
\nonumber\\
&&+\frac{1}{2}\log\frac{|g^{(r)}(\theta^{(p)})|}{|g^{(p)}(\theta^{(p)})|}
+\mathrm{o}(1).
\label{L_exp_final}
\end{eqnarray}
Note that this is invariant up to $\mathrm{o}(1)$ under the reparameterization of $\omega$.

Let $P$ be a matrix whose $(\alpha,\beta)$-component is given by
\begin{eqnarray}
P_{\alpha\beta}
&=&g^{(r)}_{\alpha\beta}(\xi^{*})
-g^{(p)}_{\alpha\beta}(\xi^{*})-g^{(r)ab}_{m}(\theta^{(p)})g^{(r)}_{a\alpha}(\xi^{(p)})g^{(r)}_{b\beta}(\xi^{(p)}).
\end{eqnarray}

To complete the proof of Theorem \ref{Risk_decomp},
we show
\begin{eqnarray}
P_{\alpha\beta}h^{\alpha}h^{\beta}/N &=& S_{\alpha\beta}h^{\alpha}h^{\beta}/N+\mathrm{o}(1).
\label{PS_equivalence}
\end{eqnarray}

From (\ref{h_decomp}),
we obtain
\begin{eqnarray}
&{}&\hspace{-7mm}P_{ab}h^{a}h^{b}
\nonumber\\
&=&
\left\{
g^{(r)}_{ab}(\xi^{(p)})-g^{(p)}_{ab}(\xi^{(p)})
-g^{(r)cd}_{m}(\theta^{(p)})g^{(r)}_{ac}(\xi^{(p)})g^{(r)}_{bd}(\xi^{(p)})
\right\}
h^{a}h^{b}
\nonumber\\
&=&-g^{(p)}_{ab}(\xi^{(p)})h^{a}h^{b}
\nonumber\\
&=&-g^{(p)}_{ab}(\xi^{(p)})
\left\{
	-g^{(p)ac}_{m}(\theta^{(p)})g^{(p)}_{c\kappa}(\xi^{(p)})h^{\kappa}
	+\mathrm{o}(1)
\right\}
\left\{
	-g^{(p)bd}_{m}(\theta^{(p)})g^{(p)}_{d\lambda}(\xi^{(p)})h^{\lambda}
	+\mathrm{o}(1)
\right\}
\nonumber\\
&=&-g^{(p)ab}_{m}(\theta^{(p)})g^{(p)}_{a\kappa}(\xi^{(p)})g^{(p)}_{b\lambda}(\xi^{(p)})h^{\kappa}h^{\lambda}
+\mathrm{o}(N)
\label{P_ab}
\end{eqnarray}
and
\begin{eqnarray}
	&{}&\hspace{-7mm}P_{a\kappa}h^{a}h^{\kappa}
	\nonumber\\
	&=&
	\left\{
g^{(r)}_{a\kappa}(\xi^{(p)})-g^{(p)}_{a\kappa}(\xi^{(p)})
-g^{(r)cd}_{m}(\theta^{(p)})g^{(r)}_{ac}(\xi^{(p)})g^{(r)}_{d\kappa}(\xi^{(p)})
	\right\}h^{a}h^{\kappa}
	\nonumber\\
	&=&
	\left\{
		g^{(r)}_{a\kappa}(\xi^{(p)})-g^{(p)}_{a\kappa}(\xi^{(p)})-g^{(r)}_{a\kappa}(\xi^{(p)})
	\right\}
	\left\{
		-g^{(p)ae}_{m}(\theta^{(p)})g^{(p)}_{e\lambda}(\xi^{(p)})h^{\lambda}+\mathrm{o}(1)
	\right\}h^{\kappa}
	\nonumber\\
	&=&g^{(p)}_{a\kappa}(\xi^{(p)})g^{(p)ab}_{m}(\theta^{(p)})g^{(p)}_{b\lambda}(\xi^{(p)})h^{\kappa}h^{\lambda}
	+\mathrm{o}(N).
	\label{P_akappa}
\end{eqnarray}
We have
\begin{eqnarray}
	P_{\kappa\lambda}h^{\kappa}h^{\lambda}
	&=&
	\left\{
		g^{(r)}_{\kappa\lambda}(\xi^{(p)})-g^{(p)}_{\kappa\lambda}(\xi^{(p)})
		-g^{(r)ab}_{m}(\theta^{(p)})g^{(r)}_{a\kappa}(\xi^{(p)})g^{(r)}_{b\lambda}(\xi^{(p)})
	\right\}h^{\kappa}h^{\lambda}.
	\label{P_kappalambda}
\end{eqnarray}
From (\ref{P_ab}), (\ref{P_akappa}), and (\ref{P_kappalambda}),
we obtain
\begin{eqnarray}
P_{\alpha\beta}h^{\alpha}h^{\beta}
&=&
P_{ab}h^{a}h^{b}+2P_{a\kappa}h^{a}h^{\kappa}+P_{\kappa\lambda}h^{\kappa}h^{\lambda}
\nonumber\\
&=&
\{
g^{(q)}_{\kappa\lambda}(\xi^{(p)})
+g^{(p)ab}_{m}(\theta^{(p)})g^{(p)}_{a\kappa}(\xi^{(p)})g^{(p)}_{b\lambda}(\xi^{(p)})
\nonumber\\
&&\quad\quad\quad-g^{(r)ab}_{m}(\theta^{(p)})g^{(r)}_{a\kappa}(\xi^{(p)})g^{(r)}_{b\lambda}(\xi^{(p)})\}
h^{\kappa}h^{\lambda}+\mathrm{o}(N).
\label{P_decomp}
\end{eqnarray}

By applying Sherman--Morisson--Woodbury identity to matrix $S$,
the following equation holds:
\begin{eqnarray}
S&=&\left[g^{(q)-1}(\xi^{*})+\begin{pmatrix}g^{(p)-1}_{m}(\theta^{(p)}) & \bigzerou^{\top}_{(d_{\mathrm{full}}-d_{m})\times d_{m}}  \\ \bigzerol_{(d_{\mathrm{full}}-d_{m})\times d_{m}}  & \bigzerol_{(d_{\mathrm{full}}-d_{m})\times (d_{\mathrm{full}}-d_{m})} \end{pmatrix}\right]^{-1}
\nonumber\\
&=&\left[g^{(q)-1}(\xi^{*})+\begin{pmatrix}I \\ \bigzerol_{(d_{\mathrm{full}}-d_{m})\times d_{m}}  \end{pmatrix}g^{(p)-1}_{m}(\theta^{(p)})
\begin{pmatrix}I & \bigzerou^{\top}_{(d_{\mathrm{full}}-d_{m})\times d_{m}} \end{pmatrix}\right]^{-1}
\nonumber\\
&=&g^{(q)}(\xi^{*})
\nonumber\\
&&-g^{(q)}(\xi^{*})
\begin{pmatrix}I \\ \bigzerol_{(d_{\mathrm{full}}-d_{m})\times d_{m}} 
\end{pmatrix}
\left[g^{(p)}_{m}(\theta^{(p)})+\begin{pmatrix}I & \bigzerou^{\top}_{(d_{\mathrm{full}}-d_{m})\times d_{m}}  \end{pmatrix}
g^{(q)}(\xi^{*})\begin{pmatrix}I \\ \bigzerol_{(d_{\mathrm{full}}-d_{m})\times d_{m}} \end{pmatrix}\right]^{-1}
\nonumber\\
&&\quad\quad\quad\quad
\begin{pmatrix}I & \bigzerou^{\top}_{(d_{\mathrm{full}}-d_{m})\times d_{m}}  \end{pmatrix}g^{(q)}(\xi^{*})
\nonumber\\
&=&g^{(q)}(\xi^{*})-g^{(q)}(\xi^{*})\begin{pmatrix}g^{(r)-1}_{m}(\theta^{(p)}) & \bigzerou^{\top}_{(d_{\mathrm{full}}-d_{m})\times d_{m}}  \\ \bigzerol_{(d_{\mathrm{full}}-d_{m})\times d_{m}}  & \bigzerol_{(d_{\mathrm{full}}-d_{m})\times (d_{\mathrm{full}}-d_{m})} \end{pmatrix}g^{(q)}(\xi^{*}),
\end{eqnarray}
where $I$ is the $d_{m}$-dimensional identity matrix,
$\bigzerol_{(d_{\mathrm{full}}-d_{m})\times d_{m}} $ is the $(d_{\mathrm{full}}-d_{m})\times d_{m}$-dimensional zero matrix,
and
$\bigzerol_{(d_{\mathrm{full}}-d_{m})\times (d_{\mathrm{full}}-d_{m})} $ is the $(d_{\mathrm{full}}-d_{m})\times (d_{\mathrm{full}}-d_{m})$-dimensional zero matrix.
From (\ref{h_decomp}),
we obtain 
\begin{eqnarray}
&{}&\hspace{-7mm}S_{ab}h^{a}h^{b}
\nonumber\\
&=&
g^{(q)}_{ac}(\xi^{(p)})g^{(r)cd}_{m}(\theta^{(p)})g^{(r)}_{db}(\xi^{(p)})h^{a}h^{b}
-g^{(q)}_{ac}(\xi^{(p)})g^{(r)cd}_{m}(\theta^{(p)})g^{(q)}_{db}(\xi^{(p)})h^{a}h^{b}
\nonumber\\
&=&
g^{(p)}_{ac}(\xi^{(p)})g^{(r)cd}_{m}(\theta^{(p)})g^{(q)}_{bd}(\xi^{(p)})h^{a}h^{b}
\nonumber\\
&=&
g^{(p)}_{ac}(\xi^{(p)})g^{(r)cd}_{m}(\theta^{(p)})g^{(q)}_{bd}(\xi^{(p)})
\left\{
	-g^{(p)ae}_{m}(\theta^{(p)})g^{(p)}_{e\kappa}(\xi^{(p)})h^{\kappa}+\mathrm{o}(1)
\right\}
\left\{
	-g^{(p)bf}_{m}(\theta^{(p)})g^{(p)}_{f\lambda}(\xi^{(p)})h^{\lambda}+\mathrm{o}(1)
\right\}
\nonumber\\
&=&
g^{(p)}_{a\kappa}(\xi^{(p)})g^{(r)ab}_{m}(\theta^{(p)})g^{(q)}_{bc}(\xi^{(p)})g^{(p)cd}_{m}(\theta^{(p)})
g^{(p)}_{d\lambda}(\xi^{(p)})h^{\kappa}h^{\lambda}+\mathrm{o}(N)
\nonumber\\
&=&
g^{(p)}_{a\kappa}(\xi^{(p)})g^{(r)ab}_{m}(\theta^{(p)})
\{g^{(r)}_{bc}(\xi^{(p)})-g^{(p)}_{bc}(\xi^{(p)})\}
g^{(p)cd}_{m}(\theta^{(p)})
g^{(p)}_{d\lambda}(\xi^{(p)})h^{\kappa}h^{\lambda}+\mathrm{o}(N)
\nonumber\\
&=&
g^{(p)}_{a\kappa}(\xi^{(p)})g^{(p)ab}_{m}(\theta^{(p)})g^{(p)}_{b\lambda}(\xi^{(p)})h^{\kappa}h^{\lambda}
-
g^{(p)}_{a\kappa}(\xi^{(p)})g^{(r)ab}_{m}(\theta^{(p)})g^{(p)}_{b\lambda}(\xi^{(p)})h^{\kappa}h^{\lambda}
+\mathrm{o}(N).
\label{S_ab}
\end{eqnarray}
From (\ref{h_decomp}) and the relationship that $g^{(q)}=g^{(r)}-g^{(p)}$,
we have
\begin{eqnarray}
&{}&\hspace{-7mm}S_{a\kappa}h^{a}h^{\kappa}
\nonumber\\
&=&g^{(p)}_{ac}(\xi^{(p)})g^{(r)cd}_{m}(\theta^{(p)})g^{(q)}_{d\kappa}(\xi^{(p)})h^{a}h^{\kappa}
\nonumber\\
&=&g^{(p)}_{ac}(\xi^{(p)})g^{(r)cd}_{m}(\theta^{(p)})g^{(q)}_{d\kappa}(\xi^{(p)})
\left\{
	-g^{(p)ae}_{m}(\theta^{(p)})g^{(p)}_{e\lambda}(\xi^{(p)})h^{\lambda}+\mathrm{o}(1)
\right\}h^{\kappa}
\nonumber\\
&=&
-g^{(p)}_{ac}(\xi^{(p)})g^{(r)cd}_{m}(\theta^{(p)})
\left\{g^{(r)}_{d\kappa}(\xi^{(p)})-g^{(p)}_{d\kappa}(\xi^{(p)})\right\}
g^{(p)ae}_{m}(\theta^{(p)})g^{(p)}_{e\lambda}(\xi^{(p)})h^{\lambda}h^{\kappa}+\mathrm{o}(N)
\nonumber\\
&=&-g^{(p)}_{a\kappa}(\xi^{(p)})g^{(r)ab}_{m}(\theta^{(p)})g^{(r)}_{b\lambda}(\xi^{(p)})h^{\kappa}h^{\lambda}
\nonumber\\
&&+g^{(p)}_{a\kappa}(\xi^{(p)})g^{(r)ab}_{m}(\theta^{(p)})g^{(p)}_{b\lambda}(\xi^{(p)})h^{\kappa}h^{\lambda}
+\mathrm{o}(N)
\label{S_akappa}
\end{eqnarray}
and
\begin{eqnarray}
&{}&\hspace{-7mm}S_{\kappa\lambda}h^{\kappa}h^{\lambda}
\nonumber\\
&=&
\left[
	g^{(q)}_{\kappa\lambda}(\xi^{(p)})
	-g^{(q)}_{\kappa a}(\xi^{(p)})g^{(r)ab}_{m}(\theta^{(p)})g^{(q)}_{b\lambda}(\xi^{(p)})
\right]h^{\kappa}h^{\lambda}
\nonumber\\
&=&\left\{
	g^{(q)}_{\kappa\lambda}(\xi^{(p)})
	-\left\{g^{(r)}_{\kappa a}(\xi^{(p)})-g^{(p)}_{\kappa a}(\xi^{(p)})\right\}
	g^{(r)ab}_{m}(\theta^{(p)})
	\left\{g^{(r)}_{b\lambda}(\xi^{(p)})-g^{(p)}_{b \lambda}(\xi^{(p)})\right\}
\right\}h^{\kappa}h^{\lambda}
\nonumber\\
&=&
g^{(q)}_{\kappa\lambda}(\xi^{(p)})h^{\kappa}h^{\lambda}
-g^{(r)}_{\kappa a}(\xi^{(p)})g^{(r)ab}_{m}(\theta^{(p)})g^{(r)}_{b\lambda}(\xi^{(p)})h^{\kappa}h^{\lambda}
-g^{(p)}_{\kappa a}(\xi^{(p)})g^{(r)ab}_{m}(\theta^{(p)})g^{(p)}_{b\lambda}(\xi^{(p)})h^{\kappa}h^{\lambda}
\nonumber\\
&&+2g^{(p)}_{\kappa a}(\xi^{(p)})g^{(r)ab}_{m}(\theta^{(p)})g^{(r)}_{b\lambda}(\xi^{(p)})h^{\kappa}h^{\lambda}.
	\label{S_kappalambda}
\end{eqnarray}

From (\ref{S_ab}), (\ref{S_akappa}), and (\ref{S_kappalambda}),
we obtain the following equation:
\begin{eqnarray*}
S_{\alpha\beta}h^{\alpha}h^{\beta}&=&
	\left\{g^{(q)}_{\kappa\lambda}(\xi^{(p)})
	+g^{(p)}_{a\kappa}(\xi^{(p)})g^{(p)}_{m}(\theta^{(p)})g^{(p)}_{b\lambda}(\xi^{(p)})
	-g^{(r)}_{a\kappa}(\xi^{(p)})g^{(r)}_{m}(\theta^{(p)})g^{(r)}_{b\lambda}(\xi^{(p)})
\right\}
h^{\kappa}h^{\lambda}+\mathrm{o}(N).
\end{eqnarray*}
Thus, we obtain (\ref{PS_equivalence}) and complete the proof of Theorem \ref{Risk_decomp}.
\end{proof}

\begin{proof}[Proof of Theorem \ref{Unbiased}]
Since
$\hat{h}^{\alpha}$ is decomposed as
\begin{eqnarray}
\frac{\hat{h}^{\alpha}}{\sqrt{N}}&=&\hat{\xi}^{\alpha}(x^{(N)})-\xi^{*\alpha}
+\xi^{*\alpha}-\xi^{(p)\alpha}
+\xi^{(p)\alpha}-\xi^{\alpha}(\hat{\theta}(x^{(N)}),0)
\nonumber\\
&=&g^{(p)\alpha\beta}(\xi^{*})\partial_{\beta}\log p(x^{(N)}|\xi^{*})
+\frac{h^{\alpha}}{\sqrt{N}}
\nonumber\\
&&-\delta^{\alpha}_{a}g^{(p)ab}_{m}(\theta^{(p)})\partial_{b}\log p(x^{(N)}|\omega^{(p)})
+\mathrm{O}(1/N),
\label{est_h_decomp}
\end{eqnarray}
the expectation of $\hat{S}_{\alpha\beta}\hat{h}^{\alpha}\hat{h}^{\beta}/N$
is given as
\begin{eqnarray*}
&{}&\hspace{-7mm}\mathrm{E}_{\omega^{*}}[\hat{S}_{\alpha\beta}\hat{h}^{\alpha}\hat{h}^{\beta}]/N
\nonumber\\
&=&\mathrm{E}_{\omega^{*}}[S_{\alpha\beta}\hat{h}^{\alpha}\hat{h}^{\beta}]/N+\mathrm{o}(1)
\nonumber\\
&=&\mathrm{E}_{\omega^{*}}\left[
S_{\alpha\beta}
\left\{
g^{(p)\alpha\gamma}(\xi^{*})\partial_{\gamma}\log p(x^{(N)}|\xi^{*})
+\frac{h^{\alpha}}{\sqrt{N}}
-\delta^{\alpha}_{a}g^{(p)ac}_{m}(\theta^{(p)})\partial_{c}\log p(x^{(N)}|\omega^{(p)})
\right\}
\right.
\nonumber\\
&&
\left.\quad\quad
\times\left\{
g^{(p)\beta\delta}(\xi^{*})\partial_{\delta}\log p(x^{(N)}|\xi^{*})
+\frac{h^{\beta}}{\sqrt{N}}
-\delta^{\beta}_{b}g^{(p)bd}_{m}(\theta^{(p)})\partial_{d}\log p(x^{(N)}|\omega^{(p)})
\right\}
\right]
\nonumber\\
&&+\mathrm{o}(1)
\nonumber\\
&=&S_{\alpha\beta}\frac{h^{\alpha}h^{\beta}}{N}+S_{\alpha\beta}g^{(p)\alpha\beta}(\xi^{*})
+S_{ab}g^{(p)ab}_{m}(\theta^{(p)})
-2S_{ab}g^{(p)ab}_{m}(\theta^{(p)})+\mathrm{o}(1)
\nonumber\\
&=&S_{\alpha\beta}\frac{h^{\alpha}h^{\beta}}{N}+S_{\alpha\beta}g^{(p)\alpha\beta}(\xi^{*})
-S_{ab}g^{(p)ab}_{m}(\theta^{(p)})+\mathrm{o}(1).
\end{eqnarray*}
Thus, we complete the proof.
\end{proof}

\bibliographystyle{Yano_sjs}
\bibliography{Yano_Komaki_MSPIC}
\end{document}